\def\Bbb{\mathbb}
\def\dist{{\rm{dist}}}
\def\dist{{\rm{dist}}}
\def\Bbb{\mathbb}
\def\reals{\Bbb R}
\def\complex{\Bbb C}
\def\disk{\Bbb D}
\def\circle{\Bbb T}
\def\integers{\Bbb Z}
\def\rhp{{\Bbb H}_r}
\def\classS{\cal S}
\def\class2{{\cal S}_2}
\def\class21{{\cal S}^*}
\def\classB{\cal B}
\def\disk{\Bbb D}
\def\cal{\mathcal}
\def\eit{e^{i \theta}}
\def\Julia{{\cal J}}
\def\zbar{{\overline{z}}}
\theoremstyle{plain}                    
\newtheorem{thm}{Theorem}[section]
\newtheorem{cor}[thm]{Corollary}
\newtheorem{lemma}[thm]{Lemma}
\newcounter{ques}
\numberwithin{equation}{section}
\begin{document}
\baselineskip=18pt


%

\title [    Models for the Eremenko-Lyubich class ]
          {  Models for the  Eremenko-Lyubich class }

\subjclass{Primary: 30D15  Secondary: 37F10, 30C62 }
\keywords{Eremenko-Lyubich class, Speiser class,  entire functions, 
approximation, 
 models, quasiconformal maps, quasiconformal 
folding, critical values, transcendental dynamics, Julia set, 
tracts, Blaschke products  }

\author {Christopher J. Bishop}
\address{C.J. Bishop\\
         Mathematics Department\\
         SUNY at Stony Brook \\
         Stony Brook, NY 11794-3651}
\email {bishop@math.sunysb.edu}
\thanks{The  author is partially supported by NSF Grant DMS 13-05233.
        }

\date{May 2014}
\maketitle


\begin{abstract}
If $f$ is in the Eremenko-Lyubich class  $\classB$ (transcendental 
entire functions with bounded singular set)  then 
$\Omega= \{ z: |f(z)| > R\}$ and $f|_\Omega$
must satisfy certain  simple topological conditions
when $R$ is sufficiently large. A model $(\Omega, F)$ 
is an open set $\Omega$ and a holomorphic function $F$ on $\Omega$
that satisfy these same conditions. We show that any model 
can be approximated by an 
Eremenko-Lyubich function in  a precise sense.   
In many cases, this allows  the construction of functions in $\classB$ 
with a desired property 
to be reduced to the construction of a model  with that 
property, and this is often much easier to do.
\end{abstract}

\clearpage


\setcounter{page}{1}
\renewcommand{\thepage}{\arabic{page}}
\section{Introduction} \label{Intro} 

The singular set of a entire function $f$ is the closure 
of its critical values and finite asymptotic values and 
is denoted $S(f)$.  The Eremenko-Lyubich class 
$\classB$ consists of functions such that $S(f)$ is a bounded 
set (such functions are also called  bounded 
type).  The Speiser class $\classS \subset \classB$ 
(also called finite type)
are those functions for which $S(f)$ is a finite set.

In  \cite{MR1196102} Eremenko and Lyubich 
showed that if $S(f) \subset \disk_R = \{ z: |z| < R\}$, 
then $\Omega =\{z: |f(z)| >R\}$ 
is  a disjoint  union  of analytic, unbounded, Jordan
domains,  and that $f$ acts a covering map 
$f: \Omega_j \to \{x:|z|>R\}$ on each component $\Omega_j$ 
of $\Omega$.
Building examples where  $\Omega$ has a certain geometry 
 is important for applications to dynamics. We would 
like to start with a model, i.e., a choice of  $\Omega$ and a covering map
$f:\Omega \to \{|z|>1\}$ and ask if $f$ can be approximated 
by an entire function $F$  in  $\classB$ or $\classS$. In  
this paper, we deal with approximation by functions in 
$\classB$.  It turns out that if $\Omega$ satisfies some 
obviously necessary topological conditions, the approximation 
by Eremenko-Lyubich functions is always possible in a 
sense strong enough to imply
that the functions $f$ and $F$ have the same dynamical behavior 
on their Julia sets.  This allows us to build entire functions
in $\classB$  with 
certain behaviors  by simply exhibiting a model with that behavior
(this is often much easier to do).
In \cite{Bishop-S-models} we deal 
with the analogous question for the Speiser class; 
again the approximation is always possible, but in a 
slightly weaker sense (dynamically, given any model we can 
build a Speiser class functions that has the model's 
dynamics on some subset of its Julia set).
In the next few paragraphs we introduce some notation to 
make these remarks more precise.

Suppose $\Omega= \cup_j \Omega_j $ is a disjoint union 
of unbounded simply connected domains such that 
\begin{enumerate}
\item sequences of components of $\Omega$ accumulate only 
     at infinity, 
\item $\partial \Omega_j$ is connected for each $j$
    (as a subset of $\complex$).
\end{enumerate} 
Such an $\Omega$ will be called a model domain. 
If $\overline{\Omega} \cap \{|z|\leq 1\} = \emptyset$, 
we say the model domain is disjoint type.
 The 
connected components $\{ \Omega_j\}$ of $\Omega$ are called tracts. 
Given a model domain,  suppose $\tau : \Omega \to \rhp = 
\{ x+iy: x>0\}$  is holomorphic  so that 
\begin{enumerate}
\item The restriction of $\tau$ to each 
   $\Omega_j$  is a conformal map $\tau_j:\Omega_j \to\rhp$, 
\item If $\{ z_n \} \subset \Omega$ and  
        $\tau(z_n) \to \infty$ then $z_n \to \infty$ .
\end{enumerate} 
Given such a   $\tau:\Omega \to \rhp$, we 
call $F(z) = \exp(\tau(z))$ a model function.

The second condition  on $\tau$ 
is a careful way of saying that the 
conformal map on each component sends $\infty$ to 
$\infty$. Even after making this condition, we still 
have a  (real) 2-dimensional family of conformal maps 
from each component of $\Omega$ to $\rhp$ determined by 
choosing where one base point in each component will map 
in $\rhp$. 
 A choice of both a model domain $\Omega$
and a model function $F$ on $\Omega$ will be called 
a model.

Given a model $(\Omega, F)$  we let 
$$ \Omega(\rho) = \{ z \in \Omega : |F(z)| > e^\rho\} = 
   \tau^{-1}(\{ x+iy: x > \rho\}),$$
and 
$$ \Omega(\delta, \rho) = \{ z \in  \Omega :  e^\delta < |F(z)| <  e^\rho\} = 
   \tau^{-1}(\{ x+iy: \delta <  x  < \rho\}). 
$$
If $\Omega $ has connected components $\{ \Omega_j\}$  we 
let $\Omega_j(\rho) = \Omega(\rho) \cap \Omega_j 
$ and similarly for $\Omega_j(\delta, \rho)$. 

\begin{figure}[htb]
\centerline{
	\includegraphics[height=3.5in]{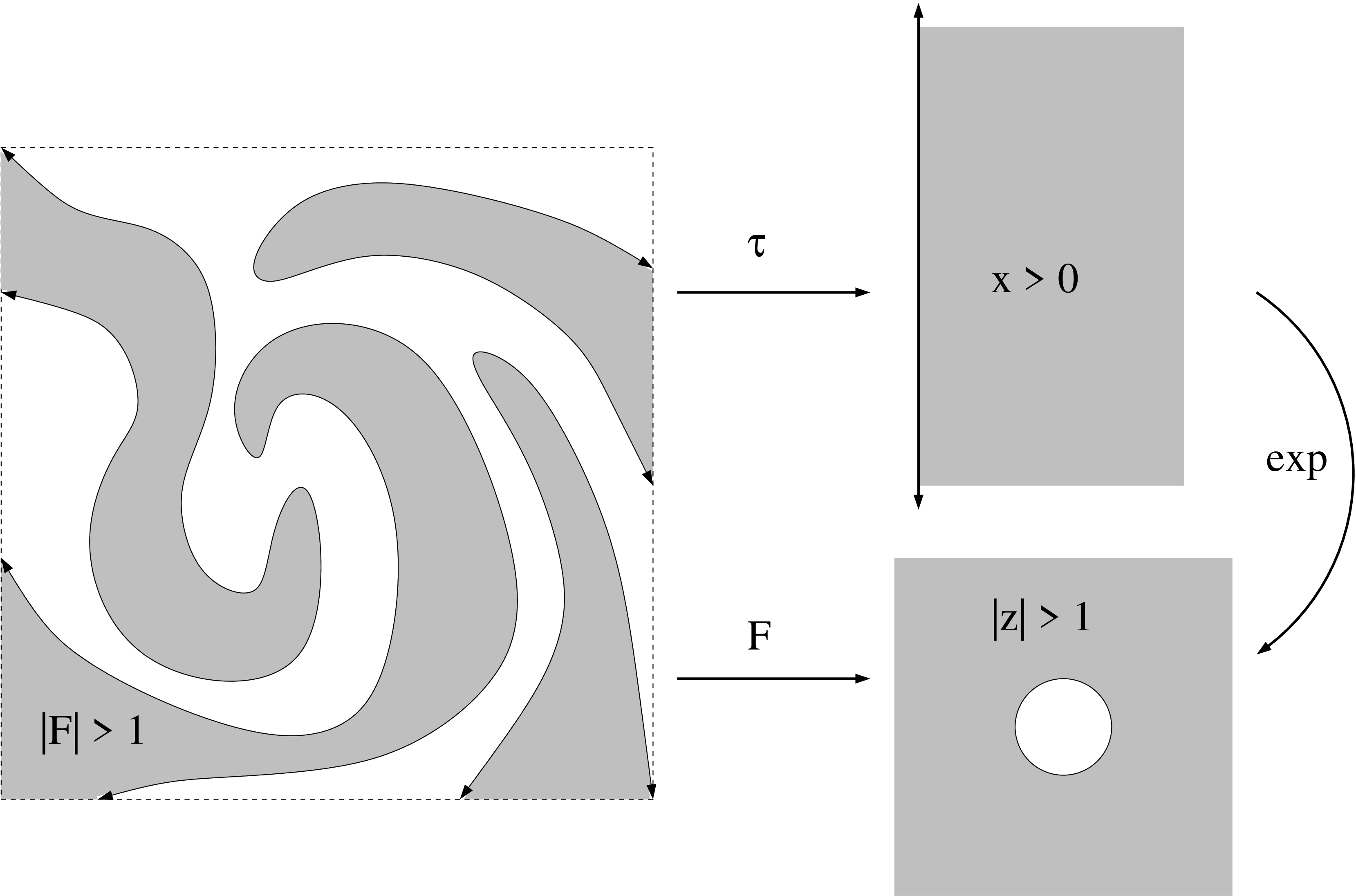}
}
\caption{ \label{LevelSet1}
A model consists of an open set $\Omega$ which may 
have a number of connected components called tracts. 
Each  tract is mapped conformally by $\tau$ to the 
right half-plane and then by the exponential function 
to the exterior of the unit disk.  The composition
of these two maps is the model function $F$. 
In this paper, we are interested in knowing 
if  a holomorphic model function on 
$\Omega$  can be approximated 
by holomorphic function on the entire plane.
}
\end{figure}

Moreover, a model has dynamics:
we can iterate $F$ as long as the iterates keep landing 
in $\Omega$, and we define the Julia set of a model  
$$ \Julia(F) = \bigcap_{n\geq 0} \{ z\in \Omega: F^n(z) 
         \in \Omega \}.$$

Each function $f$ in the Eremenko-Lyubich  class that 
satisfies $S(f) \subset \disk$  gives rise 
to a model  by taking $\Omega = \{z : |f(z)|> 1\}$ and 
 $\tau(z) = \log f(z)$.
The $\log$ is well defined since each component of $\Omega$ is 
simply connected and $f$ is non-vanishing on $\Omega$. 
Eremenko and Lyubich proved in \cite{MR1196102}  
that $\tau$ defined in this way is a conformal map from 
each  component of $\Omega$ to $ \rhp$.
We call a model arising in this way an Eremenko-Lyubich model. 
If $f$ is in the Speiser class, we call it a Speiser model.

An entire function $f$ is called 
hyperbolic if $f \in \classB$ and if 
there is a compact  
set $K$ so that $f(K) \subset \rm{int}(K)$ and 
$f: f^{-1}(\complex \setminus K) \to \complex \setminus K$
is a covering map. This is equivalent to saying that the 
singular set is bounded and every point of $S(f)$ iterates
to an attracting periodic cycle in the Fatou set.
 If we can take $K$ to be connected, 
then $f$ is called disjoint type. 
This implies the Fatou set of $f$ (i.e., the set where the iterates
of $f$ form a normal family) is connected (e.g., see \cite{Rempe-App}).
The assumption that $S(f) \subset \disk$ and 
$\Omega = \{ z:|f(z)| >1\}$  implies that $f$ is 
disjoint type. 
In this case, the usual Julia set of $f$ (defined 
as the complement of the Fatou set)    is the 
same set as the Julia set of the corresponding 
model 
$$ \Julia(f) = \bigcap_{n \geq 0} \{ z:  |f^n(z)|  \geq  1\}.$$
Thus we can refer to $\Julia(f)  $  where 
we think of $f$ as either an entire function in 
$\classB$ or as a model function on $\Omega =\{ x:|f(z)|> 1\}$
 without ambiguity.
Basic facts about hyperbolic and disjoint type functions 
are discussed in \cite{BRW14}.

The question now arises of whether  or not
the Eremenko-Lyubich models are 
only a very special subclass of  general models.
There are at least two ways to make such a 
comparison: geometric and dynamical.  We start 
with our geometric result.

A homeomorphism of the plane is called quasiconformal 
if it is absolutely continuous on almost all vertical 
and horizontal lines and the partial derivatives 
$f_z = f_x -i f_y$ and $f_\zbar = f_x + i f_y$
almost everywhere  satisfy 
$$  |f_\zbar| \leq k |f_z|,$$
where $0 \leq k < 1$. Geometrically, the derivative 
of $f$ exists almost everywhere and sends circle
to ellipses of eccentricity at most $K = (1+k)/(1-k)$. 
This number $K$ is called the quasiconstant of $f$.
The ratio $\mu = f_\zbar / f_z$ is called the complex
dilatation of $f$. The measurable Riemann mapping 
theorem (see e.g., \cite{MR2241787}, \cite{MR0344463}) 
says that given any measurable $\mu$ with $|\mu|< k < 1$, 
there is a quasiconformal homeomorphism $\varphi$ of the 
plane so that the complex dilatation of $\varphi$ equal $\mu$ 
almost everywhere. We shall actually use the following 
consequence of this: if $\psi: \Omega \to \Omega'$ is a 
a  quasiconformal map between planar domains, then there
is a quasiconformal map $\varphi: \complex \to \complex $
so that $\psi \circ \varphi$ is conformal on $\varphi^{-1}(\Omega)$.

We can now state our main result:

\begin{thm}[All models occur] \label{QR}
Suppose $(\Omega, F)$ is a model and $0 < \rho  \leq 1$.
Then there is $f \in \classB$ and a quasiconformal $\varphi: \complex 
\to \complex$ so that  $ F = f \circ \varphi$ on $  \Omega(2\rho).$
In addition, 
\begin{enumerate}
\item $|f \circ \varphi| \leq  e^{2\rho}$ off $\Omega(2\rho$)
     and  
    $|f \circ \varphi| \leq  e^{\rho}$ off $\Omega(\rho$).
Thus the components of $\{z: |f(z)| > e^\rho\}$ are 
     in $1$-to-$1$ correspondence to the components of $\Omega$
     via  $\varphi$.
\item $S(f) \subset D(0, e^\rho)$.
\item 
the quasiconstant of
     $\varphi$ is $ O(\rho^{-2})$
     with a constant  independent of  $F$ and $\Omega$,
\item  $\varphi^{-1}$ is conformal except on the set 
          $\Omega(\frac \rho 2,2 \rho).$
\end{enumerate}
\end{thm}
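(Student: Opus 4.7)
The plan is to invoke the measurable Riemann mapping theorem. I will construct a quasiregular function $g : \complex \to \complex$ that equals $F$ on $\Omega(2\rho)$, is holomorphic off the interpolation region $\Omega(\rho/2, 2\rho)$, satisfies the modulus bounds $|g| \leq e^{2\rho}$ on $\complex \setminus \Omega(2\rho)$ and $|g| \leq e^\rho$ on $\complex \setminus \Omega(\rho)$, and has all critical and asymptotic values confined to $\disk_{e^\rho}$. Once such $g$ is built with complex dilatation $\mu_g$ supported on $\Omega(\rho/2, 2\rho)$ and $\|\mu_g\|_\infty$ small enough that the associated quasiconstant is $O(\rho^{-2})$, the measurable Riemann mapping theorem yields a quasiconformal $\varphi : \complex \to \complex$ with $\mu_\varphi = \mu_g$ almost everywhere. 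Then $f := g \circ \varphi^{-1}$ has zero Beltrami coefficient and is therefore entire, and $f \circ \varphi = g = F$ on $\Omega(2\rho)$. Conclusions (1)--(4) of the theorem then follow directly from the corresponding properties of $g$ and from the support of $\mu_\varphi$.

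To construct $g$, I work tract-by-tract using the half-plane coordinates $\tau_j : \Omega_j \to \rhp$, in which $F$ becomes simply $\exp$. The local problem reduces to building a quasiregular extension $E : \complex \to \complex$ of $\exp|_{\{\Re z > 2\rho\}}$ with $E = \exp$ on $\{\Re z \geq 2\rho\}$, dilatation supported on $\{\rho/2 < \Re z < 2\rho\}$, satisfying $|E| \leq e^\rho$ on $\{\Re z \leq \rho\}$, and mapping $\{\Re z \leq \rho/2\}$ into $\disk_{e^\rho}$. A convenient recipe is $E(x+iy) = \exp(\alpha(x) + iy)$ on the interpolation strip for a smooth monotone $\alpha$, giving $\mu_E = (\alpha'(x)-1)/(\alpha'(x)+1)$; a careful choice of $\alpha$ yields the quasiconstant bound $O(\rho^{-2})$. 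On $\{\Re z \leq \rho/2\}$, $E$ is defined as an explicit bounded holomorphic map into $\disk_{e^\rho}$. Transporting $E$ back to $\Omega$ via the maps $\tau_j$ then defines $g$ on $\Omega$; it equals $F$ on $\Omega(2\rho)$ by construction and has no critical points there since $\tau_j$ is conformal and $\exp$ has none.

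The main obstacle is gluing these tract-by-tract constructions into a single globally-defined quasiregular $g$ on $\complex$, because each $\tau_j^{-1}$ is only defined on $\rhp$ and must be extended across $\partial \Omega_j$ into $\complex \setminus \overline{\Omega}$. The topological hypotheses---that components of $\Omega$ accumulate only at infinity and each $\partial \Omega_j$ is a connected Jordan arc through $\infty$---let me select pairwise disjoint collar neighborhoods of the tracts, into which each $\tau_j^{-1}$ is extended quasiconformally across $\partial \Omega_j$, possibly after an initial quasiconformal normalization of $\partial \Omega_j$ to a quasicircle. On the remaining complement of all collars, $g$ is defined as a bounded holomorphic function matching the boundary values prescribed by $E$ on $\{\Re z \leq 0\}$, which can be arranged consistently across collar boundaries. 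The delicate point is that every bit of dilatation introduced during this gluing must lie inside $\Omega(\rho/2, 2\rho)$, as required by conclusion (4); once this support condition and the $O(\rho^{-2})$ quasiconstant estimate are verified, the measurable Riemann mapping theorem completes the proof.
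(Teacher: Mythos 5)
Your overall scaffolding (build a quasiregular $g$ agreeing with $F$ on $\Omega(2\rho)$, bounded and holomorphic off the interpolation region, then apply the measurable Riemann mapping theorem) matches the paper's strategy, but the proposal has a genuine gap at exactly the point where all the content of the proof lives: the definition of $g$ on $\complex\setminus\overline{\Omega(\rho)}$. You require $g$ to be \emph{holomorphic} there (conclusion (4) forces the dilatation to be supported in $\Omega(\rho/2,2\rho)$, hence no dilatation is allowed outside $\Omega$), bounded by $e^\rho$, and to match prescribed continuous boundary values coming from $E$ on the collar boundaries. But boundary values of a bounded holomorphic function on a domain cannot be prescribed arbitrarily -- this is an overdetermined problem -- so ``a bounded holomorphic function matching the boundary values\dots which can be arranged consistently'' is not a construction. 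The paper's answer to precisely this difficulty is to take the Riemann map $\Psi$ of the single simply connected domain $W=\complex\setminus\overline{\Omega(\rho)}$, build an infinite Blaschke product $B$ whose level set $B^{-1}(1)$ \emph{approximately} aligns with the partition $\tau_j^{-1}(1+2\pi i\integers)$ of each boundary curve (Lemma \ref{Blaschke lemma 1}, which needs real harmonic-measure estimates), and then absorb the unavoidable mismatch between the two partitions by a quasiconformal correction entirely inside the strip $\Omega(\rho,2\rho)$, using the partition alignment, the folding map along slits $X_K$, and the $\exp$--$\cosh$ interpolation. The folding is what creates the critical points whose values land on $\{|w|=e^{\rho}\}$; without some substitute for this mechanism you cannot make the holomorphic exterior piece meet $F$ continuously while keeping the dilatation inside $\Omega(\rho/2,2\rho)$.

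A second, related problem is your gluing step: you propose to extend each $\tau_j^{-1}$ quasiconformally across $\partial\Omega_j$ into disjoint collars, ``possibly after an initial quasiconformal normalization of $\partial\Omega_j$ to a quasicircle.'' The tracts of a model are arbitrary unbounded simply connected domains with connected boundary; $\partial\Omega_j$ need not be a quasicircle (it can be a highly non-rectifiable curve), so no such extension with bounded constant exists in general, and even when it does, the dilatation it introduces lies \emph{outside} $\Omega$, violating both conclusion (4) and the $O(\rho^{-2})$ quasiconstant bound. The paper never touches $\partial\Omega_j$: all surgery happens on the analytic level curves $\partial\Omega(\rho)$ and $\partial\Omega(2\rho)$ (images of vertical lines under the conformal maps $\tau_j^{-1}$), and the exterior is handled in one piece via $\Psi$. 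Finally, note that the paper first reduces to $\rho=1$ by explicit quasiconformal rescalings in the $\tau$-coordinates, which is where the factor $O(\rho^{-2})$ actually comes from; your dilatation computation for $E(x+iy)=\exp(\alpha(x)+iy)$ is fine as far as it goes but is not the bottleneck.
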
 

Another useful way to state the result (for those 
familiar with the language), is that for any model 
$F$ and any $\rho >0$, $F$ restricted to $\Omega(\rho)$ 
can be extended to a quasiregular function on 
$\complex$ that is bounded off $\Omega(\rho)$  and 
has   a quasiconstant bounded  depending only 
on $\rho$. The extension is holomorphic off 
$\Omega(\rho/2)$.  The precise definition and basic 
properties of quasiregular functions can be 
found in, e.g., 
 \cite{MR1859913}, 
\cite{MR0344463},
 \cite{MR994644},
\cite{MR1238941}.

We say that two  continuous 
 maps   $f:X \to X$ and $g: Y\to Y$ are 
conjugate if there is a homeomorphism 
$h:X \to Y$ so that 
$$ g = h \circ f \circ h^{-1}.$$
It is easy to see that if this holds then  
$$ g^n  = h \circ f^n \circ h^{-1}, $$
for all $n \geq 0$, so that the orbits of $f$ correspond 
via $h $ to the orbits of $g$. For our purposes
this means the dynamics of $f$ and $g$ are ``the same''.

Lasse Rempe-Gillen has pointed out that Theorem 
\ref{QR} implies the following result: 

\begin{thm} \label{EL models}
If $F$ is any disjoint type  model, then  there is a disjoint type
Eremenko-Lyubich 
function $f$ so that $f$ and $F$ are quasiconformally
conjugate on a neighborhood of their Julia sets.  
More precisely, there is a quasiconformal $\varphi: \complex
\to \complex$ so that 
$$ f \circ \varphi = \varphi \circ F,$$
on  an open set that contains both $\Julia(f)$ and $\Julia(F)$.
\end{thm}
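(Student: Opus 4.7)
The plan is to apply Theorem~\ref{QR} to produce an Eremenko--Lyubich function $f$ together with a quasiconformal semiconjugacy $F = f\circ\varphi_0$, and then to upgrade this semiconjugacy to a genuine conjugacy by the standard pull-back argument from hyperbolic dynamics.

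First I would choose the scale $\rho$. Since $(\Omega, F)$ is disjoint type, $\overline\Omega$ has positive distance $d>0$ from $\overline\disk$, so for every $z \in \Julia(F)$ the image $F(z)\in\Omega$ satisfies $|F(z)| \geq 1+d$. Pick $\rho\in(0,1]$ with $e^{2\rho}\leq 1+d$, so that $\Julia(F)\subset \Omega(2\rho)$, and apply Theorem~\ref{QR} at this $\rho$ to obtain $f\in\classB$ and a quasiconformal $\varphi_0:\complex\to\complex$ with $F = f\circ\varphi_0$ on $\Omega(2\rho)$ and $S(f)\subset D(0,e^\rho)$. By Theorem~\ref{QR}(1) the components of $\{|f|>e^\rho\}$ are in bijection with the tracts of $F$ via $\varphi_0$; combined with the disjoint-type hypothesis on $F$ (and, if necessary, rescaling $f$ by a small positive constant so that its tracts are compactly separated from the closed disk containing $S(f)$), this shows $f$ is itself of disjoint type.

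The central step is converting the semiconjugacy $F = f\circ\varphi_0$ into an actual conjugacy. For $z$ in a neighborhood $U$ of $\Julia(F)$ contained in $\Omega(2\rho)$, define $\varphi_{n+1}(z)$ inductively as the branch of $f^{-1}$ applied to $\varphi_n(F(z))$ whose image lies in the tract of $f$ corresponding, under the bijection from Theorem~\ref{QR}(1), to the tract of $F$ containing $z$. Because $f$ is disjoint type, every such inverse branch is a strict contraction of the hyperbolic metric of the relevant Fatou component, so $\{\varphi_n\}$ is hyperbolically Cauchy on $U$ and converges uniformly to a limit $\varphi$. Passing to the limit in $f\circ\varphi_{n+1} = \varphi_n\circ F$ yields $f\circ\varphi = \varphi\circ F$ on $U$, and a symmetric argument in the other direction shows $\varphi(U)$ contains a neighborhood of $\Julia(f)$. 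Since holomorphic pull-back preserves complex dilatation, each $\varphi_n$ has the same quasiconstant as $\varphi_0$, and the quasiconformal limit $\varphi$ has the same uniform bound; extending $\varphi$ to all of $\complex$ by gluing with $\varphi_0$ off a smaller neighborhood of $\Julia(F)$ completes the construction.

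The principal obstacle is the convergence and quasiconformality of $\varphi$ in the last step. Both rely on the uniform hyperbolic contraction of inverse branches of $f$ on the Fatou set of a disjoint-type function, which is classical (see \cite{Rempe-App}) but hinges on having verified disjoint type of $f$, together with a careful branch-choice that keeps each $\varphi_n$ a global homeomorphism of $\complex$ agreeing with $\varphi_0$ away from the Julia set.
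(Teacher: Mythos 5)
Your overall strategy is the one the paper uses: apply Theorem \ref{QR} to obtain $F = f\circ\varphi_0$ on $\Omega(2\rho)$, then run the pull-back iteration $\varphi_{n+1} = f^{-1}\circ\varphi_n\circ F$ to promote the semiconjugacy to a conjugacy. The divergence is in how you establish convergence of $\{\varphi_n\}$, and that step as written has a genuine gap. You argue that the inverse branches of $f$ contract the hyperbolic metric, hence the sequence is ``hyperbolically Cauchy.'' Even granting uniform contraction by a factor $\lambda<1$ (which for disjoint type is true but already requires the Eremenko--Lyubich expansion estimate, not merely compact containment of the tracts in $\{|w|>1\}$), contraction only yields $\sup_z d_{\mathrm{hyp}}(\varphi_{n+1}(z),\varphi_n(z))\leq \lambda^{n}\sup_z d_{\mathrm{hyp}}(\varphi_1(z),\varphi_0(z))$, and you never verify that the right-hand supremum is finite. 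The hyperbolic metric of $\{|w|>1\}$ degenerates at infinity, and the claim that the plane quasiconformal map $\varphi_0$ displaces points only a uniformly bounded hyperbolic distance is a separate, nontrivial lemma (it appears in Rempe-Gillen's rigidity paper \cite{MR2570071}); without it the geometric series argument does not close. A second, related defect is the domain bookkeeping: you define $\varphi_{n+1}$ only on a neighborhood $U$ of $\Julia(F)$, but the recursion needs $\varphi_n$ to already be defined (and to take values where the inverse branches of $f$ exist) on $F(U)$, which is not contained in $U$; the induction as stated does not propagate.

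The paper's proof (Theorem \ref{RempeRigidity disjoint-type}, due to Rempe-Gillen) avoids both problems. It first replaces $\varphi_0$ by a quasiconformal $\phi$ equal to $\varphi_0$ outside a compact set and equal to the identity on $\overline{\disk}$, and then defines each $\Phi_{n}$ on all of $\complex$: the pull-back formula on $\Omega$ and $\phi$ off $\Omega$. Because $f$ and $F$ are holomorphic covering maps, every $\Phi_n$ is $K$-quasiconformal with the same $K$. Convergence is then obtained not from contraction but from the observation that $\Phi_n(z)$ is eventually constant in $n$ for every $z$ in the dense open set of points that escape $\Omega$ under iteration, combined with compactness of $K$-quasiconformal families; no bound on initial hyperbolic displacement is needed, the branch choices and domains are automatically consistent, and the extension of the conjugacy to the whole plane comes for free rather than as a gluing afterthought. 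If you want to keep your contraction-based route, you must add the finite-displacement lemma for $\varphi_0$ and restructure the induction so that each $\varphi_n$ is globally defined.
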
 

 This means that any property 
of $\Julia(F)$ that is preserved by quasiconformal maps 
also holds for $\Julia(f)$, e.g., every component 
of $\Julia(f)$ is path connected or the Julia set 
has positive area. Since it is generally 
easier to build a  model with a desired property than 
to build a entire function directly, this result is 
useful in constructing Eremenko-Lyubich functions
with pathological behavior. For example, 
Rempe-Gillen uses this result in  \cite{Rempe-arc-like}
 to show there are 
functions in $\classB$ so that the components of
the Julia sets are pseudo-arcs,  by building 
models that have this property.

 Theorems \ref{QR} and \ref{EL models} are   inspired 
by results of Lasse Rempe-Gillen
\cite{Rempe-App} that draw the same conclusions
from a stronger hypothesis: he assumes that $F = e^\tau$
is defined on a model domain $\Omega$ with a single 
tract  and 
restricts it to a slightly smaller domain than $\Omega(\rho)$; 
 roughly, he omits a strip whose width grows
logarithmically, 
i.e.,  $\tau^{-1}(\{x+i y: x > \max(1, \log |y|\})$.
His version of Theorem \ref{QR} is proved by using a 
Cauchy integral construction to first approximate $F$ uniformly 
and then show that uniform approximation implies quasiconformal 
approximation in the sense of Theorem \ref{QR}. 
Rempe-Gillen then shows how to  deduce Theorem \ref{EL models} from 
Theorem \ref{QR} using an iterative construction. With his 
permission, we sketch his argument  in Section \ref{Rempe rigidity}
for the convenience of the reader (our application does not 
require the much more powerful results  he also proved in 
\cite{MR2570071}).

We sketch the proof of Theorem \ref{QR} quickly here to 
give the basic idea.
 Let  $W = \complex \setminus \overline{\Omega(\rho)}$. 
It is  simply connected, non-empty and not the whole plane,
 so there is a conformal map $\Psi: W \to \disk$. 
Since $\Psi $ maps  $\partial W $ 
to the unit circle, if we knew that $F=f|_\Omega$ for
some entire  function $f$, then  $ B= e^{-\rho} \cdot F \circ \Psi^{-1}$  
would be an inner function on $\disk$ (i.e., a  holomorphic function 
on $\disk$ so that $|B|=1$ almost everywhere on the boundary).

The  proof of Theorem \ref{QR}  reverses this 
observation. Given the model and the corresponding domain 
$W$ and conformal map $\Psi$ 
we construct a Blaschke product $B$ (a special type of inner function)
 on the disk so that 
$G=B \circ \Psi $ approximates $F=e^\tau$  
on $\partial \Omega(\rho)$
(the precise  nature of the approximation will be described later).
This  step is fairly straightforward using standard estimates of 
the Poisson kernel on the disk. We then ``glue'' $G  $ to $F$
 across $\partial W$ to get a quasi-regular 
function $g$ that agrees with $F$ on $\Omega(2\rho)$ and agrees 
with $G$ on $W$.
This takes  several (individually easy) steps to accomplish.
We then use the measurable 
Riemann mapping theorem to define a quasiconformal mapping $\phi: 
\complex \to \complex$ so that $f = g \circ \phi$ is holomorphic
on the whole plane.
The only critical 
points of $g$ correspond to critical points of $B$, and 
critical points introduced into $\Omega(\rho, 2\rho)$
by the gluing process.  We will show that  both types
of   critical values have absolute 
value $ \leq e^\rho$. A different argument shows that any 
finite asymptotic value of $f$ must correspond to a limit 
of $B$ along a curve in $\disk$, so  all finite asymptotic
values of $f$ are also bounded by $e^\rho$. Thus $f \in \classB$.
Since $g$ is only non-holomorphic in $\Omega( \rho, 2 \rho)$, 
we will also get that $\phi^{-1}$ is conformal everywhere except 
in $\Omega(\rho, 2 \rho)$.

Given Theorem \ref{QR} for the Eremenko-Lyubich class $\classB$,
 it is natural to ask  the analogous question for the more restrictive 
Speiser class: can every model be approximated by a Speiser model? 
This question is addressed in \cite{Bishop-S-models}, where an 
analog of Theorem \ref{QR} is proven for the Speiser class. 
In that paper we show that given a model  $(\Omega, F)$
 and  any $\rho >0$,  there is a $f \in \classS$ 
and a quasiconformal map $\phi : \complex \to \complex $  so that 
$ f \circ \phi = e^{\tau}$   on  $\Omega(2\rho)$.
We may take $\phi$  to be   conformal on $\Omega(\rho )$, 
and $f$ may be taken with the two critical values $\pm e^\rho$
and no finite asymptotic values.

Note that this result omits the  conclusion
 ``$ f \circ \phi$ is bounded off $\Omega(2 \rho)$''.
In fact,  the Speiser functions constructed in 
\cite{Bishop-S-models} will usually 
be unbounded off $\Omega$;  $f$ can 
have  ``extra'' tracts that do not correspond to 
tracts of the original model function $F$. 
It is shown in \cite{Bishop-S-models} that $f$ has 
at most twice as many tracts as $F$ and sometimes
this many are needed.
The Speiser version of 
Theorem \ref{EL models} says that  
if $(\Omega, F)$ is any model, there is a Speiser class
function $f$, a closed set $A \subset \Julia(f)$,
an open neighborhood $U$ of $A$
and a quasiconformal map $\varphi: \complex \to 
\complex$ that conjugates $f$ to $F$ on $U$.
Thus the dynamics of any model can be found ``inside'' 
the Julia set of a Speiser class function. 
See \cite{Bishop-S-models} for the precise statement.

Finally, the construction in this paper uses a construction 
called ``simple folding''. A more complicated version of this 
is used in \cite{Bishop-classS} to construct functions in the 
Speiser class with prescribed geometry. The paper \cite{Bishop-S-models}
on Speiser models uses the main result of \cite{Bishop-classS}
to prove the results described in the preceding paragraphs. 
Thus this paper can be thought of as a  gentle introduction 
to  \cite{Bishop-classS}, whereas \cite{Bishop-S-models} is a sequel 
to \cite{Bishop-classS}. 
The results of both this paper and \cite{Bishop-S-models}
originally appeared in a single  preprint titled ``The geometry of bounded
type entire functions'', but I have split this into two papers 
in an attempt to improve the exposition and to separate the simpler, 
self-contained arguments   for the Eremenko-Lyubich class $\classB$ from
 the more intricate arguments relying on \cite{Bishop-classS} needed
for the Speiser class $\classS$.

Using quasiconformal methods to construct 
holomorphic functions with desired geometry 
has a long history and  
has been a crucial tool in several areas such 
as value distribution theory and, more
recently, holomorphic dynamics. See 
\cite{MR2121873} and \cite{MR2435270} for surveys 
of  applications to the first area and 
\cite{BrannerBook} for a recent survey of the 
second.

Many thanks to Adam Epstein, Alex Eremenko and Lasse Rempe-Gillen
for very helpful discussions about the contents of this paper and 
about the folding constructions and their applications. 
The  introduction of the paper and the formulation of 
the main result in terms of approximation of models 
was inspired by a lecture of Lasse Rempe-Gillen at
an ICMS conference on transcendental dynamics in Edinburgh, May 2013.
Also thanks to the anonymous referee whose comments prompted 
the revision of the entire manuscript and improved 
its clarity and correctness. 
Malik Younsi also  read the revised manuscript and 
gave me numerous corrections and suggestions that 
I greatly appreciate.

\section{Reduction of  Theorem \ref{QR} to the case $\rho=1$} \label{reduction}

We start the proof of Theorem \ref{QR} with the observation that 
it suffices to prove the result for $\rho=1$.

To do this we define two quasiconformal maps, $\psi_\rho$
and $\varphi_\rho$.  
Define 
$$
L(x)  =
\begin{cases}
   x,                 & 0 < x < \rho/2, \\
   (\frac {2-\rho}{\rho})(x-\rho/2)+\rho/2     &    \rho/2 \leq x \leq \rho,\\
   x /\rho  &    \rho \leq x \leq 2\rho.\\
\end{cases}
$$
This is a piecewise linear  map
that sends  $[\rho/2,\rho]$ to $[\rho/2,1]$ and sends 
$[\rho, 2 \rho]$ to $[1 ,2]$. The slope on both intervals 
is less than $2 /\rho$.
For $z = x+ i y \in \rhp$,  define
$$
\sigma_\rho (z) =
\begin{cases}
    L(x) + iy   &     0  < x \leq 2\rho,\\

    z+2-2\rho &   x > 2\rho.  \\
\end{cases}
$$
This is quasiconformal $\rhp \to \rhp$ with quasiconstant 
$K  \leq  2/\rho$.  Then set 
$$
\psi_\rho (z) =
\begin{cases}
    z, &  z \not \in \Omega  \\
    \tau_j^{-1} \circ \sigma_\rho \circ \tau_j(z),  &
  z \in \Omega_j .\\
\end{cases}
$$
Note that $\psi_\rho$ is the identity near 
$\partial \Omega$, so  $\psi_\rho$ is quasiconformal 
on the whole plane by the 
 Royden gluing lemma, e.g., 
Lemma 2 of \cite{MR0422691}, Lemma I.2 of \cite{MR816367} 
on page 303, or  \cite{MR0254234}. 
(Actually, since $\psi_\rho$ is the identity off 
$\Omega(\rho/2)$ which has a smooth boundary, one can 
use a weaker version of the gluing lemma.)

 Next, define 
$$
\varphi_\rho (z) =
\begin{cases}
    z, & |z| < e^{\rho/2}  \\
     \exp(\sigma_\rho(\log(z))), &  |z| \geq e^{\rho/2}  \\
\end{cases}.
$$
Note that even though $\log(z)$ is multi-valued, 
the function $\sigma_\rho$ does not change  the imaginary part 
of its argument, so the exponential of $\sigma_\rho(\log(z))$ is 
well defined.  This is clearly a quasiconformal map of the plane 
with quasiconstant $2/\rho$.
Note also  that these functions were chosen so that if 
$F = \exp \circ \tau$ is the model function associated 
to $\Omega $ and $\tau$, then  on $\Omega_j$ 
 \begin{eqnarray} \label{F conj}
 F \circ \psi_\rho 
\nonumber
&=&\exp \circ \, \tau_j \circ \tau_j^{-1} \circ \sigma_\rho \circ \tau_j \\
&=& \exp \circ \,  \sigma_\rho \circ \log \circ \exp \circ \tau_j\\
\nonumber
&=& \varphi_\rho \circ F.
\end{eqnarray}

Now apply Theorem \ref{QR} to the model $(\Omega, F)$ 
with $\rho=1$ to get a $f \in \classB$  and
a quasiconformal map $\Phi: \complex \to 
\complex$ so that  
$f \circ \Phi = F$ on $\Omega(2)$ and $S(f) \subset D(0, e^1)$. 
 Let $g_\rho = \varphi^{-1}_\rho \circ f \circ \Phi  \circ \psi_\rho$. 
This is an entire function pre and post-composed with 
quasiconformal maps of the plane, so it is quasiregular.
By the measurable Riemann mapping theorem, there
is a quasiconformal $\Phi_\rho: \complex \to \complex$ 
so that $f_\rho = g_\rho \circ \Phi^{-1}_\rho$ is entire
and clearly 
$$S(f_\rho) = S(g_\rho)  \subset \varphi_\rho^{-1} (S(f))
   \subset \varphi_\rho^{-1}(D(0,e)) =  D(0,e^\rho).$$
For $z \in \Omega(2\rho)$, $\psi_\rho(z) \in \Omega(2)$, so
using this and (\ref{F conj}) 
 \begin{eqnarray*} 
f_\rho \circ \Phi_\rho(z) 
&=& g_\rho(z)  \\
&=& \varphi^{-1}_\rho  ( f(\Phi(  ( \psi_\rho(z)))) \\
&=& \varphi^{-1}_\rho  ( F  ( \psi_\rho(z))) \\
&=& F(z).
\end{eqnarray*}
Similarly, $|f_\rho \circ \Phi_\rho|=|g_\rho|$ is 
bounded by $e^{2\rho}$ off $\Omega(2 \rho)$. 
The quasiconstant of $\Phi_\rho$ is, at worst, the 
product of the constants for $\Phi$, $\psi_\rho$ and $\varphi_\rho$, 
which is $K_1 \cdot 4  \rho^{-2}$, where $K_1$ is the 
upper bound for the quasiconstant in 
Theorem \ref{QR} in the case $\rho =1$.  

Finally, our construction in the next section 
will show that $\Phi$ is conformal except on 
$\Omega(1,2)$ and that $F$ has 
a quasiregular extension to the plane that is 
holomorphic except on $\Omega(1,2)$ and is bounded 
by $e$ off $\Omega(1)$ and by $e^2$ off $\Omega(2)$.
This implies 
that   $g_\rho$ is holomorphic except on  $\Omega(\rho/2,2 \rho)$ 
(since $\psi_\rho$ is holomorphic off $\Omega(\rho/2,2\rho)$
and $\varphi_\rho^{-1} $ is holomorphic off
 $\{e^{\rho/2} < |z| < e^2 \}$.)
 This, in turn, implies 
that $\Phi_\rho$ is conformal except on 
$\Omega(\rho/2, 2\rho)$, as desired. 
Thus $f_\rho$ satisfies Theorem \ref{QR} for the model 
$(\Omega, F)$ and the given $\rho > 0$.

\section{The proof of Theorem \ref{QR}} \label{proof of thm QR}

In this section we give the  proof of Theorem \ref{QR}
for $\rho=1$, 
stating certain facts as lemmas to be proven in later 
sections.

Let $W =\complex \setminus  \overline{\Omega(1)}$. 
This is an open, connected, simply connected domain that
is bounded by analytic arcs $\{\gamma_j\}$ that are each 
unbounded in both directions. See Figure \ref{WandOmega}.
The same comments hold for the larger domain 
 $W_2 =\complex \setminus  \overline{\Omega(2)}$.

\begin{figure}[htb]
\centerline{
\includegraphics[height=2.2in]{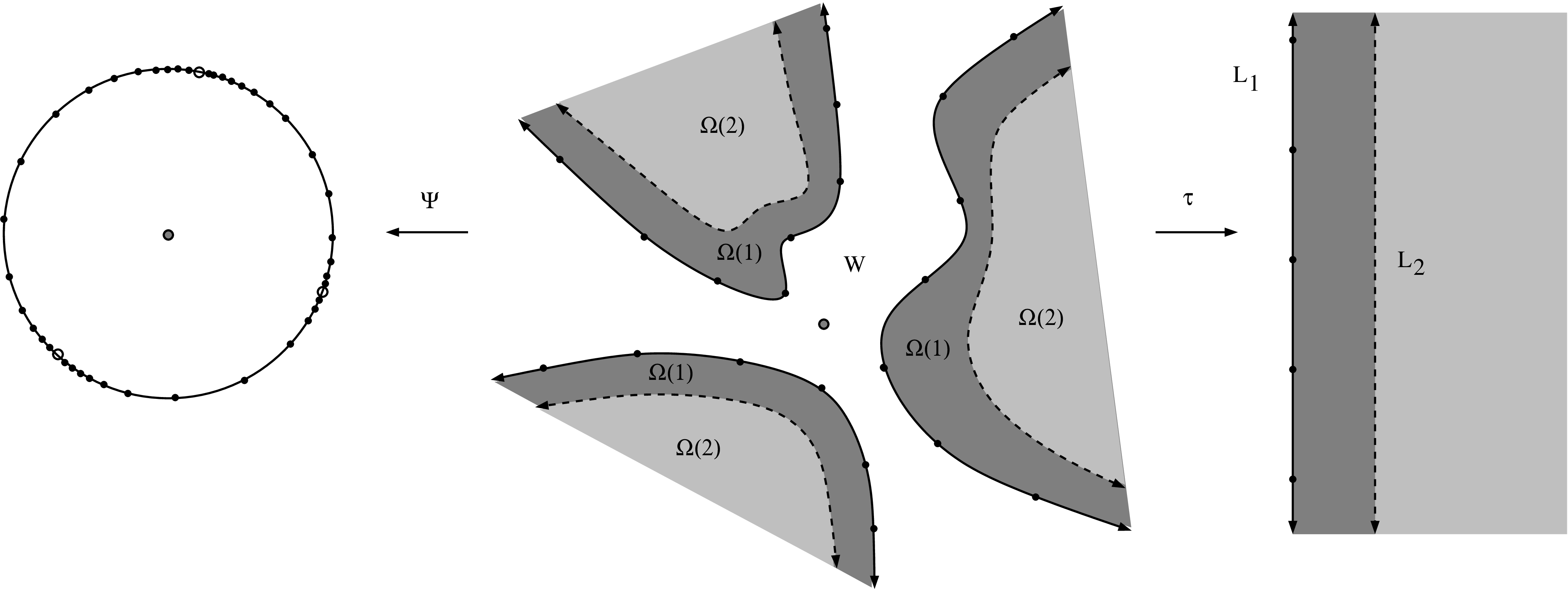}
}
\caption{ \label{WandOmega}
$W$ is the complement of $\Omega(1)$; it is simply connected and 
bounded by smooth curves. We are given the holomorphic 
 function $F = e^\tau$ on 
$\Omega(2)$ and we will define a holomorphic  function on $W$
using the Riemann map $\Psi$ of $W$ to the unit disk, and a specially 
chosen infinite Blaschke product $B$ on the disk. We will then 
interpolate these functions in $\Omega(2)\setminus \Omega(1)$
by a quasiregular function. 
Each component of this set is mapped to a vertical strip by 
$\tau$, and it is in these strips that we construct the 
interpolating functions. Note that the integer partition 
on the boundary of the half-plane pulls back under $\tau$ 
to a partition of each component of $\partial \Omega(1)$, and  
that $\Psi$ maps these to a partition of the unit circle 
(minus the singular set of $\Psi$). The Blaschke product $B$
will be constructed  so that $B^{-1}(1)$ approximates 
this partition of the circle.
}
\end{figure}

Let $L_1 =\{x+iy: x=1\}$ and 
$L_2 =\{x+iy: x=2\}$. The vertical strip between these 
two lines will be denoted $S$.   Note that 
$L_1$ is partitioned into intervals of length 
$2 \pi$ by the points $ 1+2 \pi i \integers$. 
This partition of $L_1$ will be denoted ${\cal J}$.
Note that $\tau_j(\gamma_j) = L_1$, so each 
curve $\gamma_j$ is partitioned by the 
image of ${\cal J}$ under $\tau_j^{-1}$.
We denote  this partition of $\gamma_j$ by ${\cal J}_j$.
Because elements of ${\cal J}_j$ are all images 
of a fixed interval $J \in L_1 \subset \rhp$ under
some conformal map of $\rhp$,  the distortion 
theorem (e.g., Theorem I.4.5 of \cite{MR2150803}) implies 
they  all lie in a compact family of smooth arcs and 
that adjacent elements of ${\cal J}_j$ have comparable 
lengths with a uniform constant, independent of $j$, 
$\Omega$ and $F$.

Let $\Psi: W \to \disk$ be a conformal map given by the 
Riemann mapping theorem.
We claim  that $\Psi$ can be analytically continued from $W$ to 
$W_2$  across $\gamma_j$.   
Let $R_1$ denote reflection across  $L_1$ 
and for $ z \in \Omega_j \cap W = \tau_j^{-1}
(\{ x+iy: 0 < x< 1\})$  let  $T = \tau_j^{-1} \circ R_1
\circ \tau_j$; this defines an anti-holomorphic $1$-to-$1$ map 
from $\Omega_j(0,1)$ to $\Omega_j(1,2)$ that fixes 
each point of $\gamma_j$. We can then 
extend $\Psi$ by the formula
 $$ \Psi(T(z)) = 1/\overline{\Psi(z)},$$   
(where the right hand side denotes reflection of $\Psi(z)$ across 
the unit circle). 
The Schwarz reflection principle says this is an 
analytic continuation of $\Psi$ to $W_2$.

Thus  $\Psi$ is a smooth map of each $\gamma_j$ onto 
an arc $I_j$ of the unit circle $\circle =  \partial \disk 
= \{ |z| =1\}$. The complement of these
arcs is a closed set $E \subset \circle$.
 It is a standard 
fact of conformal mappings that  since $E$   is the set where
a conformal map fails to have a finite limit, it has 
zero Lebesgue, indeed, zero logarithmic capacity. We 
will not need this fact, although we will use the easier 
fact that $E$ can't contain an interval (i.e., a conformal 
map can't have infinite limits on an interval). 

 The partition ${\cal J}_j$ of $\gamma_j$ transfers, 
via $\Psi$ to a   partition of $I_j \subset \circle$ into 
infinitely many intervals $\{ J_k^j\}, k \in \integers$.
We will let ${\cal K} = \cup_{j,k} J_k^j $ denote the  collection 
of all intervals that occur this way. Thus $\circle 
= E \bigcup \cup_{K \in{\cal K}} K$.

Because $\Psi$  conformally extends   from $W$ to   $W_2$,
  $|\Psi'|$ has comparable minimum 
and maximum on each partition element of $\gamma_j$ (with 
uniform constants). Thus the corresponding intervals $\{ J^j_k\}$   
have the property that adjacent intervals have comparable 
lengths (again with a uniform bound).

The hyperbolic distance between two points $z_1, z_2 \in \disk$
is defined as 
$$ \rho(z_1,z_2) = \inf_\gamma \int_\gamma \frac {|dz|}{ 1-|z|^2} .$$
See Chapter 1 of \cite{MR2150803} for the basic properties of 
the hyperbolic metric. Here we will mostly need the facts
that it is invariant under M{\"o}bius  self-maps of the 
disk, that hyperbolic geodesics are circular arcs in 
$\disk$ that are perpendicular to $\circle$, and that
 points  hyperbolic distance $r$ from $0$ are 
Euclidean distance 
$$ \frac 2{\exp(2r)+1} = O(\exp(-r)),$$
from the unit circle.

For any proper sub-interval $I \subset \circle$, let
$\gamma_I$ be the hyperbolic geodesic with the same
endpoints as $I$ and let $a_I$ be the point on 
$\gamma_i$ that is closest to the origin (closest
in either the Euclidean or hyperbolic metrics; it is 
the same point).

Since $ {\cal K}$ are disjoint intervals on the circle, 
$$ \sum_{K \in {\cal K}} (1-|a_K|) < \infty,$$
and so   
$$ B(z) = \prod_{\cal K} \frac {|a_K|}{a_K}
 \frac  {  a_K-z}{ 1- \overline{a_K} z}, $$
 defines a convergent  Blaschke product (see Theorem II.2.2 of 
\cite{MR628971}). 
Thus $B$ is a bounded, non-constant, holomorphic function 
on $\disk$ that vanishes exactly on the set $\{ a_n\}$. 
Also, $|B|$ has radial limits $1$ almost 
everywhere.  Moreover, $B$ extends meromorphically to 
$\complex \setminus E$, where $E$ is the accumulation set 
of its zeros on $\circle$; this is the same set $E$ as 
defined above using the map $\Psi$ (the zeros accumulate 
at both endpoints of every component of $\circle 
\setminus E$, and since these points are dense in 
$E$, the accumulation set of the zeros is the whole 
singular set $E$).
The poles of the extension are precisely  the 
points in the exterior of the unit disk that are 
the reflections  across $\circle$ of the zeros. 

Any subset ${\cal M}$ of ${\cal K}$ also  defines a 
convergent Blaschke product. Fix such a subset.
The corresponding
 Blaschke product $B_{\cal M}$
 induces a partition of each $I_j$ with 
endpoints given by the set $\{\eit: B_{\cal M}(\eit) =1 \}$ and this 
induces a partition ${\cal H}_j$ of  each $\gamma_j$ via the 
map $\Psi$. This in turn, induces a partition ${\cal L}_j$ of
$L_1$ via $\tau_j$.

 We would like to say that the partitions 
${\cal L}_j$ and ${\cal J}$ are ``almost the same''. The 
first step to making this precise is a lemma that
we will prove in Section \ref{Blaschke sec}:  

\begin{lemma} \label{Blaschke lemma 1}
There is a subset ${\cal M} \subset {\cal K}$ so
that if $B$ is the Blaschke product corresponding to
${\cal M}$ and ${\cal L}_j$ is the partition of $L_1$
corresponding to $B$ via $\tau_j \circ \Psi^{-1}$, then
each element of ${\cal J}$ hits at least $2$
elements of ${\cal L}_j$ and at most $M$
 elements of ${\cal L}_j$, where $M$ is uniform.
 In particular, no element of ${\cal J}$ can
hit both endpoints of any element of ${\cal L}_j$ (elements
of each partition are considered
as closed intervals).
\end{lemma}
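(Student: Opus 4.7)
The plan is to restate the lemma as a Poisson-kernel estimate on $\circle$ and then to choose ${\cal M}$ so that the Blaschke product $B = B_{\cal M}$ has its $1$-level set suitably distributed across the partition ${\cal K}$.

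First I would translate the statement back to the circle: via $\tau_j \circ \Psi^{-1}$, the partition ${\cal J}$ of $L_1$ corresponds to $\{K \in {\cal K}: K \subset I_j\}$, and ${\cal L}_j$ corresponds to the partition of $I_j$ cut out by $B^{-1}(1) \cap I_j$. Thus the lemma is equivalent to saying that each $K \in {\cal K}$ contains between $1$ and $M-1$ points of $B^{-1}(1)$ in its interior. Since $B$ is an inner function, $\arg B$ is real-analytic and strictly increasing on each component of $\circle \setminus E$ with boundary derivative $\sum_{K' \in {\cal M}} P_{a_{K'}}(\theta)$, so the cardinality of $B^{-1}(1) \cap K$ is controlled up to $\pm 1$ by
$$
\Delta \arg B|_K \;=\; \sum_{K' \in {\cal M}} \int_K P_{a_{K'}}(\theta)\, d\theta.
$$
It therefore suffices to bound this quantity uniformly between some $c > 2\pi$ and $2\pi M$, possibly after post-composing $B$ with a unimodular rotation to nudge any endpoint-aligned values of the level set into interiors.

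The estimates I would carry out are standard. Since $a_K$ is the apex of the hyperbolic geodesic above $K$, we have $1 - |a_K| \asymp |K|$, and a direct arctangent computation gives $\int_K P_{a_K}(\theta)\, d\theta \in [\pi, 2\pi)$. For $K' \in {\cal M}$ with $K' \neq K$,
$$
\int_K P_{a_{K'}}(\theta)\, d\theta \;\lesssim\; \frac{|K|\,|K'|}{|K'|^2 + d(K,K')^2},
$$
where $d(K, K')$ denotes angular distance. The uniform comparability of adjacent lengths in ${\cal K}$ (recorded just before the lemma) ensures that the sum over $K'$ is bounded both above, yielding the upper bound $2\pi M$, and below by a positive constant depending only on the comparability ratio. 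Combined with the own-term $\geq \pi$, this gives the required two-sided bound on $\Delta \arg B|_K$.

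The main obstacle is making the lower bound strictly larger than $2\pi$: in the idealized uniform case with ${\cal M} = {\cal K}$ one computes $\Delta \arg B|_K = 2\pi$ exactly, by a conservation-of-mass argument (the average winding per interval around $\circle$ must be $2\pi$). I would overcome this by taking ${\cal M} = {\cal K}$ and then composing with a rotation $B \mapsto c_0 B$ for a unimodular $c_0$ chosen to avoid the countably many phases at which $B^{-1}(1)$ could land on an endpoint of some $K$; then ``$\Delta \arg B|_K \geq 2\pi$'' already forces at least one interior point of the level set in each $K$. An alternative is to slightly thin ${\cal K}$ by removing a sparse collection of intervals and adjusting by hand on the bounded exceptional part, producing a uniform strict surplus over $2\pi$. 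Either implementation reduces the lemma to the Poisson-kernel estimates of the preceding paragraph.
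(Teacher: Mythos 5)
You have the right analytic framework---the boundary identity $|\partial_\theta B|=\sum_K P_{a_K}$ and the reduction of the lemma to two-sided bounds on $\Delta\arg B$ over the intervals of ${\cal K}$---and this much coincides with the paper. But your key inequality points in the wrong direction, and that is a genuine gap, not a stylistic difference. The lemma's statement appears to transpose ${\cal J}$ and ${\cal L}_j$: what the proof actually establishes, and what the rest of the construction needs (see the ``in particular'' clause, the remark that there are \emph{more} elements of ${\cal J}$ than of ${\cal L}_j$, the injective map of Lemma \ref{partition map}, and the folding of surplus ${\cal J}$-intervals), is that each element of ${\cal L}_j$ meets at least $2$ and at most $M$ elements of ${\cal J}$, i.e.\ the ${\cal L}$-intervals are the \emph{longer} ones. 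Since each element of ${\cal L}_j$ carries exactly $2\pi$ of $\arg B$, this is equivalent to $2\pi\epsilon\le\int_K|\partial_\theta B|\,d\theta\le 3\pi/2$ for every $K\in{\cal K}$---an upper bound strictly \emph{below} $2\pi$, not a lower bound above it. The paper gets this by making ${\cal M}$ a \emph{sparse} subset of ${\cal K}$: take an $R$-net of the points $a_K$ in the hyperbolic metric and, near each net point, select the shortest interval within $S$ steps; Lemma \ref{bound HM sum} then bounds $\sum_{J\in{\cal M}}\omega(K,a_J,\disk)$ between $\epsilon$ and $\mu<1$, because at most one selected point is hyperbolically close to $a_K$ (contributing at most $1/2$) and the remaining ones contribute $O(e^{-R/8})+O(1/S)$.

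Even on the literal reading you adopted, the argument does not close. With ${\cal M}={\cal K}$ you do not get $\Delta\arg B|_K\ge 2\pi$ for every $K$: conservation of mass controls only an average, and for bounded-ratio but non-uniform partitions (say lengths alternating between $\ell$ and $L=M\ell$) a short interval $K$ receives winding $\pi+O(\ell/L)\cdot 2\pi<2\pi$, since its own term is exactly $\pi$ (because $\omega(K,a_K,\disk)=1/2$, not a quantity in $[\pi,2\pi)$ as you wrote) and the neighbors' Poisson kernels place only mass $O(\ell/L)$ on $K$. Rotating $B$ cannot manufacture missing winding, and thinning ${\cal K}$ only \emph{decreases} $\Delta\arg B|_K$, so neither proposed fix yields a uniform surplus over $2\pi$. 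The repair is to reverse the goal: sparsify ${\cal M}$ so every $K\in{\cal K}$ receives strictly less than $2\pi$ (but at least $2\pi\epsilon$) of winding, and read the conclusion with ${\cal J}$ and ${\cal L}_j$ in the roles they play in the paper's proof and in Lemma \ref{partition map}.
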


In Section \ref{linearize sec} we will prove

\begin{lemma} \label{linearize}
Suppose $K =  [1+ia,1+ib] \in  {\cal L}_j$ and define
$$ \alpha(1+iy) = \frac 1{2\pi} \arg( B \circ \Psi \circ \tau_j^{-1}(1+iy)),$$
where we choose a branch of $\alpha$  so $\alpha(1+ia) =0$
(recall that $B(\Psi(\tau_j^{-1}(1+ia))) =1 \in \reals$). Set
$$\psi_1(z) =   1+i(a(1-\alpha(z)) + b\alpha(z)) =1+ i(a + (b-a)\alpha(z)).$$
Then $\psi_1$ is a homeomorphism from $K$ to itself so  that
$ \alpha \circ \psi^{-1}_1 :K \to [0, 1]$ is linear
and $\psi_1$ can be extended to a quasiconformal
homeomorphism of $R= K  \times [1,2]$ to itself
that is the identity on the $\partial R \setminus K$
(i.e., it fixes points on the top, bottom and right side
of $R$).
\end{lemma}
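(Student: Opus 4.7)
The plan is to first analyze $\alpha$ on $K$, then write down a simple linear interpolation in the $x$-direction to extend $\psi_1$ to $R$, and finally bound the complex dilatation on the compact rectangle. The composition $h = B \circ \Psi \circ \tau_j^{-1}$ is holomorphic on the portion of $\rhp$ adjacent to $K$ on the side $\{x<1\}$, because $\tau_j^{-1}$ sends that portion into $\Omega_j(0,1)\subset W$ where $\Psi$ is defined, and $B$ is holomorphic on $\disk$. It extends analytically across $K$ into the strip $\{1<x<2\}$ via the Schwarz reflection extension of $\Psi$ to $W_2$ described earlier, combined with meromorphic extension of $B$ across the subarc $\Psi(\gamma_j)\subset \circle\setminus E$. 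On $K$ we have $|h|=1$, and the standard Poisson-kernel identity for the boundary derivative of a Blaschke product gives $\frac{d}{d\theta}\arg B(e^{i\theta})=\sum_n \frac{1-|a_n|^2}{|e^{i\theta}-a_n|^2}>0$ on $\circle\setminus E$, so $\arg h$ is strictly monotonic on $K$. Since the endpoints of $K\in{\cal L}_j$ are by definition consecutive preimages of $1$ under $h$, the normalization $\alpha(1+ia)=0$ forces $\alpha$ to climb continuously and strictly monotonically to exactly $1$ at $1+ib$. Hence $\psi_1(1+iy)=1+i(a+(b-a)\alpha(1+iy))$ is a homeomorphism $K\to K$, and $\alpha\circ\psi_1^{-1}(1+iy')=(y'-a)/(b-a)$ is linear by construction.

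For the extension, I would take
\[
\psi_1(x+iy)=x+i\bigl[(2-x)\bigl(a+(b-a)\alpha(1+iy)\bigr)+(x-1)y\bigr],\qquad (x,y)\in[1,2]\times[a,b].
\]
A direct check confirms the boundary behavior: at $x=1$ we recover the given map on $K$; at $x=2$ the formula collapses to $2+iy$, the identity; at $y=a$ (where $\alpha=0$) the imaginary part is $(2-x)a+(x-1)a=a$, and at $y=b$ (where $\alpha=1$) it is $(2-x)b+(x-1)b=b$, so the top and bottom edges are pointwise fixed. Writing $\psi_1=u+iv$ with $u=x$, the Jacobian is $v_y=(2-x)(b-a)\alpha'(1+iy)+(x-1)$, which is strictly positive on $R$ since $\alpha'>0$ on $K$ and $x-1\geq 0$ (the two nonnegative terms never vanish simultaneously). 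So $\psi_1$ is an orientation-preserving $C^1$ homeomorphism of $R$ onto itself.

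Quasiconformality reduces to bounding the dilatation $\mu=f_{\bar z}/f_z$, where $f_z=\tfrac12((1+v_y)+iv_x)$ and $f_{\bar z}=\tfrac12((1-v_y)+iv_x)$. A bound $|\mu|\le k<1$ will follow from $v_y$ being bounded above and bounded away from $0$, together with a bound on $|v_x|$, on the compact rectangle $R$. These bounds hold because $\alpha$ is real-analytic on a neighborhood of the compact interval $K$ by the first step, so $\alpha'$ is continuous and (being strictly positive) is bounded above and below by positive constants, which control $v_y$ from both sides and control $|v_x|=|y-(a+(b-a)\alpha(1+iy))|\le b-a$.

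The main obstacle is really just the first paragraph: ensuring that $h=B\circ\Psi\circ\tau_j^{-1}$ extends holomorphically across $K$ with nonvanishing derivative, so that $\arg h$ is strictly monotonic there. This is what simultaneously supplies the surjectivity of $\alpha$ onto $[0,1]$ (needed for $\psi_1$ to be a self-homeomorphism of $K$) and the positive lower bound on $\alpha'$ used in the dilatation estimate. Once this analytic foundation is in place, the explicit interpolation formula and the resulting quasiconformality are an essentially routine computation on a compact set.
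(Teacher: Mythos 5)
Your extension formula and dilatation computation are exactly the paper's: the same linear interpolation in $x$, the same derivative matrix, and the same reduction to ``$v_y$ bounded above and away from $0$, $|v_x|$ bounded.'' The genuine gap is in how you obtain the bounds on $\alpha'$. You argue that $h=B\circ\Psi\circ\tau_j^{-1}$ extends analytically across $K$ with $\alpha'>0$, and then invoke continuity on the compact interval $K$ to conclude $0<c_K\le\alpha'\le C_K$. That yields constants depending on $K$ (and on $j$), whereas the construction involves infinitely many rectangles $R_K$ --- one for each element of each ${\cal L}_j$ --- and the global map $g$ assembled from these pieces must be quasiregular with a \emph{uniformly} bounded quasiconstant; this uniformity is what feeds into item (3) of Theorem \ref{QR}, where the constant must be independent of $F$ and $\Omega$. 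The paper flags precisely this right after the lemma statement: the main point of the proof is that $\arg(B\circ\Psi\circ\tau_j^{-1})$ is biLipschitz on $K$ \emph{with uniform bounds}. A compactness argument cannot deliver uniformity over an infinite family of intervals.

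The uniform two-sided bound is Corollary \ref{interpolating}, namely $\epsilon\le |K|^{-1}\,\partial B/\partial\theta\le C$ with absolute constants, and it is not a general fact about Blaschke products: it is a consequence of how the subcollection ${\cal M}$ of zeros was selected in Lemma \ref{bound HM sum}. The lower bound requires that some zero $a_J$ lie within bounded hyperbolic distance of $a_K$ (the net ${\cal N}$), and the upper bound requires that the total Poisson mass $\sum_{J\in{\cal M}}P_{a_J}$ seen by $K$ stay below $1/2$ plus small errors (the $R,S,T$ separation argument). Your identity $\frac{d}{d\theta}\arg B=\sum_n P_{a_n}$ is the correct starting point, but it must be coupled to those harmonic-measure estimates rather than to compactness. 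A second, minor instance of the same issue: your bound $|v_x|\le b-a=|K|$ is only useful because $|K|$ is uniformly bounded, which again comes from Lemma \ref{Blaschke lemma 1} rather than from anything intrinsic to a single $K$. Everything else in your write-up --- the strict monotonicity of $\alpha$, the self-homeomorphism of $K$, the boundary checks for the interpolation, and the formula for $\mu$ --- is correct and matches the paper's argument.
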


The main point  of the proof is  to show that 
 $\arg(B\circ \Psi \circ \tau_j^{-1}) : K \to [0, 2 \pi]$ is biLipschitz 
with uniform bounds.

Roughly, Lemma \ref{Blaschke lemma 1} 
says  there are more elements of ${\cal J}$ than 
there are of ${\cal L}_j$.  This is made a little 
more precise by the following:

\begin{lemma} \label{partition map}
There is a $1$-to-$1$, order preserving map of ${\cal L}_j$ into 
(but not necessarily onto) ${\cal J}$ so that 
each interval $K \in {\cal L}_j$ is sent to 
an interval $J$ with $\dist(K, J) \leq 2 \pi$. 
Moreover, adjacent elements of ${\cal L}_j$ map 
to elements of ${\cal J}$ that are either adjacent
or are separated by an even number of elements 
of ${\cal J}$.  
\end{lemma}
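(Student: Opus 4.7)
The plan is to define $\phi : {\cal L}_j \to {\cal J}$ by sending each $L \in {\cal L}_j$ to a canonically chosen element of ${\cal J}$ that it overlaps, and then to adjust by a small one-position shift to secure the parity condition. For each $L$, let $J_{a(L)}$ and $J_{b(L)}$ be the elements of ${\cal J}$ containing the left and right endpoints of $L$, and set $s(L) = b(L) - a(L) + 1$ for the number of consecutive elements of ${\cal J}$ spanned by $L$. By the clause of Lemma \ref{Blaschke lemma 1} saying that no element of ${\cal J}$ contains both endpoints of any element of ${\cal L}_j$, we have $a(L) < b(L)$ and $s(L) \geq 2$.

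First I would try the simplest choice $\phi(L) = J_{a(L)}$. If $L, L'$ are adjacent in ${\cal L}_j$ with $L$ to the left of $L'$ and shared endpoint $p$, then $p$ is simultaneously the right endpoint of $L$ and the left endpoint of $L'$, so $a(L') = b(L) > a(L)$. Hence $\phi$ is strictly order-preserving on adjacent pairs, and by transitivity on all of ${\cal L}_j$, giving both order preservation and injectivity at once. Since $J_{a(L)}$ contains the left endpoint of $L$, $\phi(L) \cap L \neq \emptyset$, and the distance bound becomes trivial: $\dist(\phi(L), L) = 0 \leq 2\pi$.

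The remaining requirement that $\phi(L') - \phi(L)$ be odd for each adjacent pair (so that the images are adjacent or separated by an even number of elements of ${\cal J}$) is the main obstacle. With the simple choice above, $\phi(L') - \phi(L) = b(L) - a(L) = s(L) - 1$, so the condition holds exactly when $s(L)$ is even, which is not a priori guaranteed. To repair this I would allow a shift of at most one position: set $\phi(L) = J_{a(L)+c(L)}$ with $c(L) \in \{0,1\}$ chosen inductively along the ordering of ${\cal L}_j$, starting from any reference $L_0$ with $c(L_0) = 0$ and declaring $c(L') \equiv c(L) + s(L) \pmod 2$ for each adjacent pair. The new difference satisfies
\[
\phi(L') - \phi(L) \;=\; (s(L)-1) + (c(L') - c(L)) \;\equiv\; 2\,s(L) - 1 \;\equiv\; 1 \pmod 2,
\]
hence is always odd. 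Because $s(L) \geq 2$, the element $J_{a(L)+1}$ is also hit by $L$, so the shift preserves $\phi(L) \cap L \neq \emptyset$ and the distance bound. Finally, a shift of size at most $1$ on either side cannot collapse the strict gap $a(L') - a(L) \geq 1$ between adjacent raw images: in the borderline case $s(L) = 2$ the recursion gives $c(L') \equiv c(L) \pmod 2$, so $c(L') = c(L)$ and $\phi(L') - \phi(L) = 1 > 0$; when $s(L) \geq 3$ the raw gap is already $\geq 2$ and a $\pm 1$ perturbation keeps it positive. Thus injectivity and order preservation survive the modification, and all four conclusions hold.
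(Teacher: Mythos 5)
Your proof is correct and follows essentially the same route as the paper: assign to each $K \in {\cal L}_j$ the element of ${\cal J}$ containing its lower endpoint (injective and order preserving by Lemma \ref{Blaschke lemma 1}), then shift by at most one position to force all gaps to be even. The only difference is bookkeeping --- the paper corrects gaps sequentially by moving selected intervals one step toward a base interval $J_0$, whereas you shift upward by a parity function $c(L)$ determined by the spans $s(L)$ (which, modulo a consistent tie-breaking convention when an endpoint of $K$ lands on an endpoint of some $J$, gives the same conclusion and in fact the stronger bound $\dist(K,J)=0$).
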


This will be proven in Section \ref{partition sec}.  Again, 
the proof is quite elementary.

Partition ${\cal J} = {\cal J}_1^j \cup {\cal J}_2^j$
according to whether the interval is  associated 
to some element of ${\cal L}_j$ by  Lemma \ref{partition map}
(i.e., ${\cal J}_1^j$ is the image of ${\cal L}_j$ under
the map in the lemma). The maximal chains of 
adjacent elements of ${\cal J}_2^j$ will be called 
blocks. By the lemma, each block has an even 
number of elements. We will say that the block 
associated to an element $J \in {\cal J}_1^j$ is 
the block immediately above $J$. 

Thus each  interval $K$  in  ${\cal L}_j$ is associated to 
 an interval   $J'$  that  consists of the corresponding 
$J $ given by Lemma \ref{partition map} and its associated block.
$K$ and $J'$ have comparable lengths and are close to 
each other, so the  orientation preserving linear 
map  from $J'$ to $K$ defines a piecewise linear 
map $\tilde \psi_2: \reals   \to \reals$ that is biLipschitz with a
uniform constant. Using linear interpolation we can 
extend this to a biLipschitz map $\psi_2$ of the 
strip $S=\{ x+iy: 1 < x < 2\}$ to itself 
 that equals $ \tilde \psi_2$ on
$L_1$ (the left boundary) and is the identity on
$L_2$ (the right side).

Each element 
$J \in {\cal J}_2^j$ is paired with a distinct element
$ J^* \in {\cal J}_2^j$ that belongs to the same block.
The two outer-most elements of the block are paired, 
as are the pair adjacent to these, and so on. 
Similarly, each point $z$ is paired with the 
other  point $z^*$ in the block that has the 
same distance to the boundary (the center of the 
block is an endpoint of ${\cal J}$ and is paired 
with itself).

For  each $K \in {\cal L}_j$, let $J_K $  be the corresponding 
element of ${\cal J}_1^j$ and let $I_K$ be the union of 
$J_K$ and its corresponding block.  Let $R_K = [1,2] 
\times I_K$.  Let $U_K = R_K \setminus X_K$, where 
$X_K$ is the  closed segment    
connecting the upper left corner of $R_K$ to the 
center of $R_K$. See Figure \ref{Uk}. 
\begin{figure}[htb]
\centerline{
\includegraphics[height=2in]{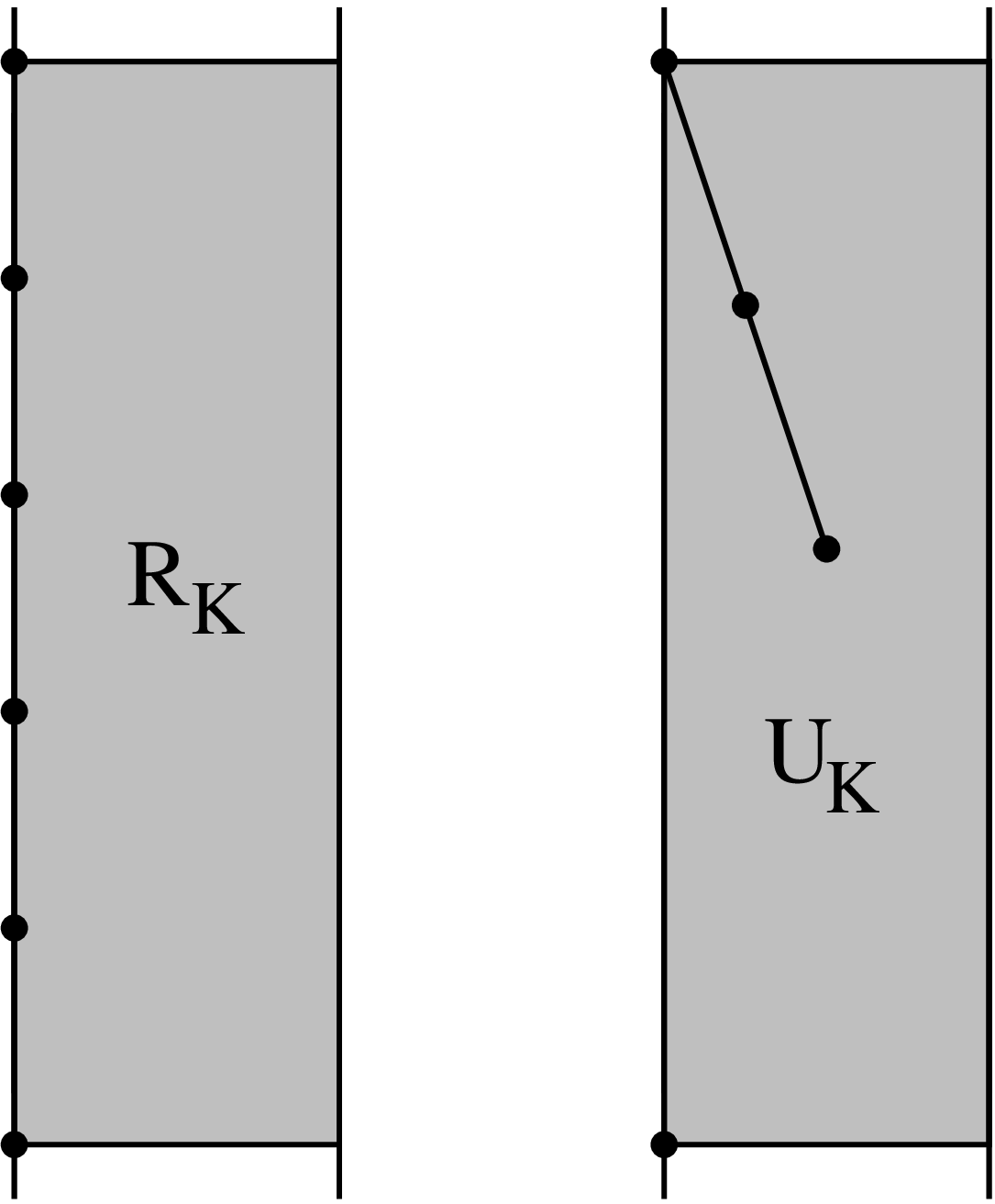}
}
\caption{ \label{Uk}
Definition of $U_K$
}
\end{figure}

\begin{lemma}[Simple folding] \label {simple folding lemma}
There is a quasiconformal map $ \psi_3 : U_K \to R_K$ so that 
($\psi_3$ depends on $j$ and on $K$, but we drop these parameters
from the notation)
\begin{enumerate}
\item $\psi_3$ is the identity on $\partial R_K \setminus L_1$ 
      (i.e., it is the identity on the  the top, bottom and 
       right side of $R_K$), 
\item $\psi_3^{-1}$ extends continuously to the boundary and 
      is linear on each element of ${\cal J}$ lying in $I_K$,
\item $\psi_3$ maps $I_K$ (linearly) to $J_K$,    
\item for each   $z \in I_K$, $\psi_3^{-1} (z) =  \psi_3^{-1} (z^*) \in X_k$ 
         (i.e., $\psi_3$ maps opposite sides of $X_k$ to paired points
          in $I_k$),
\item the quasiconstant of $\psi_3$ depends only on $|I_K|/|J_K|$, 
i.e., on the number of elements in the block associated to $K$.
It is independent of the original model and of  the choice
of $j$ and $K$.
\end{enumerate} 
\end{lemma}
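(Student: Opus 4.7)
Plan:

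The strategy is to construct the folding homeomorphism $\psi_3 \colon U_K \to R_K$ by first prescribing its boundary correspondence and then extending quasiconformally to the interior. By an initial affine rescaling (which is conformal, hence preserves QC distortion), I would normalize to $R_K = [0,1] \times [0,N]$ with $N = |I_K|/|J_K| = 2m+1$, so that $J_K = \{0\} \times [0,1]$, the block $\{0\} \times [1,N]$ is partitioned into $2m$ unit subintervals paired by reflection across the midpoint $(0, m+1)$, and $X_K$ is the segment from $(0,N)$ to the center $(1/2, N/2)$.

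Next I would specify the boundary map $\psi_3 \colon \partial U_K \to \partial R_K$. Viewing $\partial U_K$ as a Jordan curve with the two sides of $X_K$ and the two accesses at the corner $(0,N)$ distinguished, the prescription is: identity on the bottom, right, and top edges; linear compression $(0,t) \mapsto (0, t/N)$ on the left edge $I_K$ of $U_K$; a piecewise-linear parametrization of the upper side of $X_K$ sending it monotonically onto the lower half of the block $\{0\} \times [1, m+1]$, mapping each elementary subdivision of $X_K$ to one of the block intervals $J_1, \dots, J_m$; and the analogous parametrization of the lower side of $X_K$ onto the upper half $\{0\} \times [m+1, N]$. The two parametrizations are chosen so that points on opposite sides of $X_K$ at equal arclength from the tip map to reflection-paired block points, which forces the tip to map to $(0, m+1)$ and makes the boundary map continuous at $(0, N)$.

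To extend to the interior I would decompose $R_K$ and $U_K$ into a bounded number of matching simple pieces (triangles or trapezoids) joined along common edges, and on each matched pair define $\psi_3^{-1}$ as the affine map determined by the vertex correspondence. A decomposition symmetric about the horizontal line $y = m+1$ in $R_K$ (which, under $\psi_3^{-1}$, corresponds to $X_K$ together with its rightward extension from the tip to the right edge of $R_K$) automatically delivers condition~(4). Continuity across common edges is built in, and the resulting global map is quasiconformal by Royden's gluing lemma. Conditions (1), (2), (3) then follow from the boundary prescription.

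The main obstacle is condition~(5): the QC distortion must depend only on $N = |I_K|/|J_K|$, not on any finer geometric data. Because the normalized rectangle has aspect ratio $1:N$ and $X_K$ is nearly vertical when $N$ is large, a fine subdivision matching the $(2m+1)$-element partition of $I_K$ produces thin triangular pieces whose affine maps can have arbitrarily distorted singular values. The fix is a coarse decomposition with a bounded number of pieces, within which the $N$-dependent boundary stretching is distributed uniformly — for example, by arranging each affine piece to have horizontal derivative varying linearly from $N$ on $I_K$ to $1$ on the right edge. A direct computation of singular values then shows each piece has QC distortion $O(N)$, giving the desired bound.
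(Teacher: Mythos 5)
Your proposal is correct and is essentially the paper's own argument: the paper likewise defines $\psi_3$ as a piecewise-linear map coming from compatible finite triangulations of $U_K$ and $R_K$ (its ``proof is a picture''), with continuity across shared edges and the quasiconstant given by the worst of the finitely many affine pieces, which after normalization depends only on $N=|I_K|/|J_K|$. The only quibble is a labeling slip in your boundary prescription --- continuity at the corner forces the side of $X_K$ adjacent to the top edge to map to the \emph{upper} half of the block and the side adjacent to $I_K$ to map to the lower half --- but the continuity and pairing requirements you impose already pin this down.
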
 

We call this  ``simple folding''  because it is 
a simple analog of a more complicated folding procedure 
given in \cite{Bishop-classS}. In the lemma above, the 
image domain is a rectangle with a slit removed and 
the quasiconstant of $\psi_3$ is allowed to grow with $n$, the 
number of block elements. This growth is not important 
in this paper because here we only apply the folding 
 construction in 
cases where this number $n$ is uniformly bounded (this 
will occur in our application  because of Lemma \ref{Blaschke lemma 1}).
In \cite{Bishop-classS}, 
the corresponding values may be arbitrarily large but the 
folding construction there must give a map with uniformly
 bounded quasiconstant regardless.
The construction in \cite{Bishop-classS} removes  a 
collection of  finite trees from $R_k$ and does 
so in a way that keeps the quasiconstant of $\psi_3$ bounded 
independent of $n$ (there are also complications involving 
how the construction on adjacent rectangles are merged).

We want to treat the boundary intervals in ${\cal J}_1$ and
${\cal J_2}$ slightly differently. The precise mechanism 
for doing this is:

\begin{lemma}[$\exp$-$\cosh$ interpolation] \label{exp cosh} 
There is a quasiregular map $\sigma_j: S \to D(0, e^2)$ 
so that 
$$
  \sigma_j(z) = 
\begin{cases}
\exp(z),& z \in J \in {\cal J}_1^j,\\
 e \cdot \cosh(z-1),&  z \in J \in {\cal J}_2^j,\\
\exp(z),&   z \in \rhp+2. \\
\end{cases} 
$$
The quasiconstant of $\phi_j$ is uniformly bounded, independent 
of all our choices.
\end{lemma}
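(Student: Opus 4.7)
The plan is to construct $\sigma_j$ piecewise on the horizontal sub-strips $R_J := [1,2] \times J$ of $S$, one for each $J \in {\cal J}$, to set $\sigma_j(z) := \exp(z)$ on the extension $\rhp + 2$, and to glue using the Royden gluing lemma. Continuity across the horizontal lines $y = 2\pi k$ separating consecutive sub-strips will be arranged by having both pieces reduce to the positive real function $e^x$ on those lines; continuity with the extension by $\exp$ across $L_2$ will be automatic. On sub-strips $R_J$ with $J \in {\cal J}_1^j$, I take $\sigma_j := \exp$; this is holomorphic with trivial dilatation, has image in the closed annulus $\{e \leq |w| \leq e^2\}$, and on the four edges of $R_J$ takes the required values $e^{1+iy}$, $e^{2+iy}$, and $e^x$.

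On sub-strips $R_J$ with $J \in {\cal J}_2^j$, I take $\sigma_j := \lambda \circ e\cosh(\,\cdot - 1)$, where $\lambda$ is a quasiconformal homeomorphism from the closed filled ellipse $E$ with semi-axes $(e\cosh 1, e\sinh 1)$ onto $\overline{D(0, e^2)}$. Using the periodicity $\cosh(z + 2\pi i) = \cosh(z)$, direct computation shows that $e\cosh(z-1)$ is holomorphic on $R_J$ with the left edge mapped onto the slit $[-e, e]$ (traversed twice, with a fold at $y = (2k+1)\pi$), the right edge mapped onto $\partial E$, and each horizontal edge mapped onto the real segment $[e,\,e\cosh 1]$. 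I choose $\lambda$ to satisfy: (i) $\lambda = {\rm id}$ on $[-e, e]$; (ii) $\lambda$ reparametrizes $[e,\,e\cosh 1]$ onto $[e, e^2]$ via $e\cosh(x-1) \mapsto e^x$; and (iii) $\lambda$ sends $\partial E$ onto $\{|w| = e^2\}$ via the angular correspondence $e(\cosh 1 \cos y + i \sinh 1 \sin y) \mapsto e^2 e^{iy}$. These three conditions force $\sigma_j = \lambda \circ e\cosh(\,\cdot - 1)$ to have the required boundary values on all four edges of $R_J$: $e\cos y = e\cosh(iy)$ on the left, $e^{2+iy}$ on the right, and $e^x$ on the top and bottom.

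Such a $\lambda$ exists with universal quasiconstant by standard quasiconformal extension theory. Concretely, take $\lambda$ to be the identity on a thin tubular neighborhood $T$ of $[-e, e]$ contained in both $E$ and $D(0, e^2)$, and on the remaining doubly-connected region $E \setminus T$ let $\lambda$ be a quasiconformal diffeomorphism onto $\overline{D(0, e^2)} \setminus T$ with the prescribed correspondences on $\partial E$ and the circle $\{|w| = e^2\}$ (and the identity on $\partial T$). Both annuli have smooth boundaries of fixed geometry and the prescribed boundary maps are real-analytic diffeomorphisms with bounded derivatives, so a quasiconformal extension (for instance via radial interpolation in conformal coordinates or a Beurling--Ahlfors-type extension) yields $\lambda$ with quasiconstant depending only on this fixed configuration, hence independent of $\Omega$, $F$, $j$, and $J$. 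Composing and gluing via Royden's lemma then produces the required $\sigma_j : S \to \overline{D(0, e^2)}$ with uniformly bounded dilatation. The main obstacle is the construction of $\lambda$; but because the geometric data (the fixed slit, ellipse, and disk) are independent of all our choices, this step gives a genuine universal bound.
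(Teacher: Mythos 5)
Your construction is essentially the paper's: since $e\cosh(z-1)=\tfrac12\bigl(e^{z}+e^{2}e^{-z}\bigr)$ is the rescaled Joukowsky map $j(w)=\tfrac12(w+e^{2}/w)$ applied to $e^{z}$, your map $\lambda\circ\bigl(e\cosh(\cdot-1)\bigr)$ coincides with the paper's $\phi\circ\exp$, the paper's quasiconformal correction $\phi$ of the annulus $\{e<|w|<e^{2}\}$ being exactly your $\lambda$ composed with $j$. Both proofs amount to post-composing a holomorphic map of the rectangle with a universal quasiconformal adjustment of a fixed slit configuration (the paper's version is literally ``a picture''), so your proposal matches the paper in substance and in level of detail.
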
  

This lemma will be proven in Section \ref{interpolate sec} and 
is completely elementary.

We now have all the individual pieces needed  to construct the 
interpolation $g_j$ between $e^z$ on $L_2$ and $ B \circ \Psi \circ 
\tau_j^{-1}$ on $L_1$.
Let $U_j $ be $S$ minus all the segments $X_K$ where $K \in {\cal L}_j$
as in Lemma \ref{simple folding lemma}.   Define a  quasiconformal map $
\psi: U_j \to S$ by  
$$  \psi =  \psi_1 \circ \psi_2 \circ \psi_3,$$
and let $g_j = \sigma_j \circ \psi$ map $U_j$ into $D(0, e^2)$.
By definition, each $\psi_i$, $i=1,2,3$ is the identity on $L_2$, 
so $g_j(z)  = e^z $ on $L_2$. For any $K \in{\cal L}_j$,
the  map $\psi$ 
sends the boundary segments of $ \partial U_K$ that lie on 
some $X_K$ linearly onto elements of ${\cal J}_2^j$,   so 
boundary points on opposite sides of  $X_K$ get mapped to 
points that are equidistant from $2 \pi i \integers$ and 
$\cosh$ agrees at any two such points. Thus $g_j$ extends 
continuously across each slit $X_K$. Finally, the map $\psi$ 
was designed so that 
 $g_j$ is continuous on $S$ and agrees
with $B  \circ \Psi \circ \tau_j^{-1} $ on $ L_1$. 
Thus $g_j \circ \tau_j $  continuously interpolates 
between $B \circ \Psi$ on $W$ and $F$ on $\Omega(2)$
and  so defines a  quasiregular  $g$ on the whole plane  
with a uniformly bounded constant. 
Thus by the measurable Riemann mapping theorem 
 there is a quasiconformal $\varphi: \complex \to \complex$
so that $f = g \circ \varphi$ is entire. 

The singular values of $f$ are the same as for $g$. On 
$\Omega( 2)$, $g = F = e^\tau$, so $g$  has no critical 
points in this region. In $U_j$, $g= g_j$ is locally $1$-to-$1$, 
so has no critical points there either. Thus the only  critical 
points of $g$ in $\Omega(1)$ are on the slits $X_K$, then these 
are mapped by $g$ onto the  circle of radius $e$ around 
the origin. Thus every critical value of $g$ (and hence $f$)
must lie in $D(0, e)$. 

 If $g$ has a finite asymptotic value 
outside $\overline{D(0, e)}$, then it must be the limit of 
$g$ along some curve $\Gamma$ contained in a single component
of $\Omega$. Then $e^z$ has a  finite limit along 
$\tau(\Gamma) \subset \rhp$; this is impossible, so 
$f$ has no finite asymptotic values outside $\overline{D(0, e)}$.
Thus $S(f) \subset \overline{D(0, e)}$, and so $f \in \classB$.

This proves Theorem \ref{QR} except for the proof of the 
lemmas.

\section{Blaschke partitions}
\label{Blaschke sec}

In this section we prove Lemma \ref{Blaschke lemma 1}. We start
by recalling some basic properties of the Poisson kernel and  
harmonic measure in the unit disk $\disk$.

The Poisson kernel on the
unit circle with respect to the point $a \in \disk$  is given
by the formula
$$ P_a(\theta) = \frac{1-|a|^2}{ |\eit - a^2|} 
= \frac {1-|a|^2}{ 1- 2 |a| \cos( \theta - \phi) +|a|^2},$$
where $a= |a| e^{i \phi}$. This is the same as
$|\sigma'|$ where $\sigma$ is any M{\"o}bius transformation
of the disk to itself that sends $a$ to zero.
If $E \subset \circle$, we  write 
$$ \omega(E, a,\disk) = \frac 1{2\pi} \int_E P_a(\eit) d \theta,$$
and call this the harmonic measure of $E$ with respect 
to $a$. This is the same as the (normalized) Lebesgue measure of 
$\sigma(E) \subset \circle $ where $\sigma: \disk \to \disk$  is
any M{\"o}bius transformation sending $a$ to $0$.
It is also the same as the first hitting distribution on 
$\circle$ of a Brownian motion started at $a$ (although we 
will not use this characterization).

Suppose $I \subset \circle$ is any proper arc, and, as
before,  let 
$\gamma_I$ be the hyperbolic geodesic in $\disk$ with 
the same endpoints as $I$; then $\gamma_I$ is  a 
circular arc in $\disk$ that is perpendicular to 
$\circle $ at its endpoints.  Let $a_I$ denote the 
point of $\gamma_I$ that is closest to the origin.

\begin{lemma}
  $\omega(I, a_I, \disk) = \frac 12$.  
\end{lemma}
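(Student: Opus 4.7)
The plan is to exploit the Möbius invariance of harmonic measure, as already noted just above the lemma: for any Möbius self-map $\sigma$ of $\disk$ sending $a$ to $0$, one has $\omega(E,a,\disk) = |\sigma(E)|/(2\pi)$. So I would choose $\sigma$ to be any such Möbius transformation with $\sigma(a_I) = 0$, and then simply identify the image arc $\sigma(I)$.

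The key geometric observation is that $\sigma$ is an isometry of the hyperbolic metric and maps hyperbolic geodesics to hyperbolic geodesics. Since $\gamma_I$ is the geodesic with endpoints equal to those of $I$, its image $\sigma(\gamma_I)$ is a geodesic with endpoints equal to those of $\sigma(I)$. But $\sigma(\gamma_I)$ passes through $\sigma(a_I) = 0$, and the only hyperbolic geodesics in $\disk$ passing through the origin are the Euclidean diameters. Hence $\sigma(\gamma_I)$ is a diameter, which means the endpoints of $\sigma(I)$ are antipodal points of $\circle$, so $\sigma(I)$ is a semicircle.

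Therefore $|\sigma(I)| = \pi$ and
\[
\omega(I, a_I, \disk) \;=\; \frac{|\sigma(I)|}{2\pi} \;=\; \frac{1}{2}.
\]
There is no real obstacle: the only thing to be careful about is the justification that $a_I$ really is the point of $\gamma_I$ that a Möbius map can send to $0$, and that the image of a geodesic through $0$ must be a diameter. Both of these are standard facts about the hyperbolic metric recalled in the paragraph preceding the lemma (hyperbolic geodesics in $\disk$ are circular arcs perpendicular to $\circle$, and Möbius self-maps of $\disk$ are hyperbolic isometries).
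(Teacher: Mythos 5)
Your proposal is correct and is essentially the paper's own argument: apply a Möbius self-map sending $a_I$ to $0$, note that $\gamma_I$ must then map to a diameter so $I$ maps to a semicircle, and conclude the harmonic measure is $\tfrac12$. You merely spell out the intermediate justifications (geodesics through the origin are diameters, etc.) in more detail than the paper does.
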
 

\begin{proof}
Apply a M{\"o}bius transformation of $\disk$ that 
sends $a_I$ to the origin. Then $\gamma_I$ must map
to a diameter of the disk and  $I$ maps to a semi-circle. 
\end{proof}

Given two disjoint arcs $I, J$ in $\circle$, let 
$\gamma_I, \gamma_J$ be the two corresponding 
hyperbolic geodesics and let $a_I^J$ be the point 
on $\gamma_I$ that is closest to $J$ and let 
$a_J^I$ be the point on $\gamma_J$ that is closest
to $I$. 

\begin{lemma} \label{HM symmetric} 
$ \omega(I, a_J^I, \disk) =\omega(J, a_I^J, \disk) $
\end{lemma}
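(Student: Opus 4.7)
The plan is to apply a M{\"o}bius self-map of $\disk$ to normalize the pair $(I,J)$ into a configuration symmetric under the involution $T(z)=-z$, then invoke conformal invariance of harmonic measure. Interpreting $a_I^J$ as the foot on $\gamma_I$ of the common perpendicular of $\gamma_I$ and $\gamma_J$ (equivalently, the hyperbolic closest point of $\gamma_I$ to $\gamma_J$), every object in the lemma is M{\"o}bius invariant, so no generality is lost.

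First I would use two of the three real M{\"o}bius degrees of freedom of $\disk$ to send the common perpendicular of $\gamma_I$ and $\gamma_J$ to the real diameter, and the remaining one-parameter subgroup of hyperbolic translations along this diameter to place the midpoint of its two feet at the origin. In the resulting picture, $a_I^J=r$ and $a_J^I=-r$ for some $r\in(0,1)$; the geodesic $\gamma_I$ is perpendicular to the real axis at $r$, with endpoints $e^{\pm i\theta}$ on $\circle$ where $\cos\theta=2r/(r^2+1)$; and by the forced $z\mapsto-z$ symmetry, $\gamma_J$ has endpoints $-e^{\pm i\theta}$. Hence $T(z)=-z$ carries $I$ onto $J$ and $a_J^I$ onto $a_I^J$, and conformal invariance of harmonic measure gives
\[
\omega(I,a_J^I,\disk)=\omega(T(I),T(a_J^I),\disk)=\omega(J,a_I^J,\disk),
\]
which is the desired identity.

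The main thing to verify is the M{\"o}bius dimension count: three real M{\"o}bius parameters of $\disk$ suffice to fix the common perpendicular (two parameters) and to centre its two feet (one parameter), and every cross-ratio class of four cyclically ordered boundary points is represented by some $r\in(0,1)$ above. A secondary point to check is that the closest-point definition of $a_I^J$ is genuinely M{\"o}bius-invariant: the foot-of-common-perpendicular reading is invariant by construction, and in the normalized picture the $z\mapsto \bar z$ symmetry forces this foot to coincide with the Euclidean closest point on $\gamma_I$ to the arc $J$, which is the literal reading in the statement.
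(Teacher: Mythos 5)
Your proof is correct and follows essentially the same route as the paper: the paper's one-line argument is to apply a M\"obius self-map of $\disk$ sending $I,J$ to antipodal arcs and conclude by the resulting $z\mapsto -z$ symmetry, which is exactly your normalized configuration. Your version merely spells out the dimension count and the M\"obius-invariance of the points $a_I^J$, $a_J^I$, which the paper leaves implicit.
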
  

\begin{proof}
Everything is invariant under M{\"o}bius maps of the
unit disk to itself, so use such a map to send  $I, J$
to antipodal arcs. Then the conclusion is obvious. 
\end{proof} 

\begin{lemma}
If $z, w \in  \disk$ and $I \subset \circle$, then 
$$ \frac {\omega(I, z, \disk)}
          {\omega(I,  w, \disk)} \leq C
$$
where the constant $C$ depends only on the hyperbolic 
distance between $z$ and $w$. 
\end{lemma}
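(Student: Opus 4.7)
The plan is to reduce the statement to Harnack's inequality, or equivalently to the explicit Poisson kernel bounds, via the M{\"o}bius invariance of harmonic measure already used in the preceding two lemmas. First I would observe that $u(z) = \omega(I, z, \disk)$ is, by definition, the Poisson integral of the indicator function $\mathbf{1}_I$, hence a positive bounded harmonic function on $\disk$, and that for any M{\"o}bius self-map $\sigma$ of $\disk$ one has $\omega(I, z, \disk) = \omega(\sigma(I), \sigma(z), \disk)$, since Lebesgue measure pulls back to harmonic measure under conformal self-maps of $\disk$.

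Next, choose $\sigma$ so that $\sigma(w) = 0$. Since M{\"o}bius maps are hyperbolic isometries, $|\sigma(z)| = \tanh(\rho(z,w)/2)$ depends only on the hyperbolic distance between $z$ and $w$. Writing $I' = \sigma(I)$ and $z' = \sigma(z)$, we have
$$\frac{\omega(I, z, \disk)}{\omega(I, w, \disk)} = \frac{\omega(I', z', \disk)}{\omega(I', 0, \disk)} = \frac{\int_{I'} P_{z'}(e^{i\theta})\, d\theta}{\int_{I'} d\theta},$$
using that $P_0 \equiv 1$. From the explicit formula for the Poisson kernel quoted at the beginning of the section,
$$\frac{1-|z'|}{1+|z'|} \;\le\; P_{z'}(e^{i\theta}) \;\le\; \frac{1+|z'|}{1-|z'|},$$
so the ratio above is at most $(1+|z'|)/(1-|z'|)$, which depends only on $\rho(z,w)$.

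Since all of the ingredients are essentially one-line calculations, there is no real obstacle; the only thing to verify carefully is the M{\"o}bius invariance of harmonic measure, which has already been invoked in the proofs of the two preceding lemmas and needs no further comment. If a sharper constant is desired one can use the identity $(1+|z'|)/(1-|z'|) = e^{\rho(z,w)}$ (with the normalization of $\rho$ fixed at the start of the section) to write $C = e^{\rho(z,w)}$ explicitly, but the qualitative statement of the lemma does not require this.
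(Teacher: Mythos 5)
Your argument is correct and is essentially the paper's: after the M\"obius reduction sending $w$ to $0$, your explicit Poisson-kernel bound $\frac{1-|z'|}{1+|z'|}\le P_{z'}\le\frac{1+|z'|}{1-|z'|}$ is precisely Harnack's inequality on $\disk$, which is what the paper invokes. (Only your optional sharp constant is off by a normalization factor: with the paper's metric $|dz|/(1-|z|^2)$ one has $(1+|z'|)/(1-|z'|)=e^{2\rho(z,w)}$ and $|z'|=\tanh\rho(z,w)$, but this does not affect the proof.)
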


\begin{proof}
Suppose $\sigma(z) = (z-w)/(1- \overline{w}z)$ maps $w $ to $ 0$. 
Then
$u(z) = \omega(I,  \sigma(z), \disk)$ is a positive harmonic
function on $\disk$, so  the lemma is just Harnack's inequality 
applied to $u$.
\end{proof}

Suppose  $I,J, \subset \circle$ are disjoint closed  arcs   and 
$\dist(I,J) \geq \epsilon\max(|I|,|J|)$. Then  we call $I$ and 
$J$ $\epsilon$-separated. This implies the hyperbolic 
geodesics $\gamma_I, \gamma_J$ are separated in the 
hyperbolic metric (with a lower bounded depending only 
on $\epsilon$), but the converse is not true.

\begin{lemma} \label{close lemma} 
If $I, J \subset \circle $  are $\epsilon$-separated,
then the hyperbolic distance between $a_I$ and $a_I^J$ is 
bounded, depending only on $\epsilon$.
\end{lemma}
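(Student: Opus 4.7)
The plan is to exploit M\"obius invariance to reduce the question to a concrete geometric calculation. Let $p_1,p_2$ denote the endpoints of $I$ on $\circle$, and $q_1,q_2$ those of $J$, with the four points in cyclic order $p_1,p_2,q_1,q_2$. Apply a M\"obius self-map $\sigma$ of $\disk$ with $\sigma(a_I)=0$, composed with a rotation so that $\sigma(I)=\{e^{i\theta}:0\le\theta\le\pi\}$ (the upper semicircle). Then $\sigma(\gamma_I)$ is the real diameter $(-1,1)$, $\{\sigma(p_1),\sigma(p_2)\}=\{-1,1\}$, and $\sigma(J)$ lies in the open lower semicircle. Setting $b=\sigma(a_I^J)\in(-1,1)$, invariance of the hyperbolic metric gives $\rho(a_I,a_I^J)=\rho(0,b)$, so it suffices to bound $|b|$ away from $1$ in terms of $\epsilon$.

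The point $b$ is the foot on $(-1,1)$ of the common hyperbolic perpendicular to $\sigma(\gamma_J)$, which is itself a circular arc orthogonal to $\circle$ with both endpoints on the lower semicircle. A short elementary computation shows that if both endpoints of $\sigma(J)$ lie at Euclidean distance at least $\delta>0$ from each of $\pm 1$, then $|b|\le 1-c(\delta)$ for an explicit $c(\delta)>0$, and hence $\rho(0,b)\le \log(2/c(\delta))$. So the lemma reduces to producing $\delta=\delta(\epsilon)>0$ with $|\sigma(q_j)\mp 1|\ge\delta$ for $j=1,2$.

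For this I use the boundary distortion of $\sigma$. Writing $\sigma(z)=e^{i\theta}(z-a_I)/(1-\overline{a_I}z)$ and using the explicit description of $a_I$ as the nearest point of $\gamma_I$ to $0$, one finds by direct calculation that $1-|a_I|^2\asymp |I|$ and $|1-\overline{a_I}p_j|\asymp |I|$, so $|\sigma'(p_j)|\asymp 1/|I|$; more generally $|\sigma(z)-\sigma(p_j)|\asymp |z-p_j|/|I|$ for $z\in\circle$ with $|z-p_j|\lesssim |I|$. Combined with the cyclic order and $\dist(I,J)\ge\epsilon\max(|I|,|J|)\ge\epsilon|I|$, this yields $|\sigma(q)-\sigma(p_j)|\gtrsim\epsilon$ for each endpoint $q\in\partial J$ and each $j\in\{1,2\}$: in the regime $|q-p_j|\lesssim|I|$ this is immediate from the distortion estimate, while in the complementary regime $|q-p_j|\gtrsim |I|$ the point $\sigma(q)$ is at Euclidean distance $\gtrsim 1$ from $\sigma(p_j)$ by the uniform behavior of $\sigma$ on the complement of a small neighborhood of $p_j$. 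Hence $\delta\gtrsim\epsilon$ works.

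The main obstacle is the uniform boundary distortion estimate of the third paragraph, together with the case analysis required when $|I|/|J|$ is very large or very small (so that the ``near'' and ``far'' regimes need to be separated for both endpoints $q_1,q_2$ and both boundary targets $\pm 1$). In each case the estimate reduces to the explicit form of M\"obius self-maps of $\disk$ and the elementary computation expressing $|a_I|$ in terms of $|I|$; once these are assembled, the bound $\rho(a_I,a_I^J)\lesssim\log(1/\epsilon)$ follows.
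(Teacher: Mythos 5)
Your argument is correct and in fact yields the explicit bound $\rho(a_I,a_I^J)=O(\log(1/\epsilon))$, but it is organized differently from the paper's proof. The paper argues by contradiction in the un-normalized picture: the geodesic $S$ realizing the distance between $\gamma_I$ and $\gamma_J$ meets $\gamma_I$ perpendicularly at $a_I^J$, and if $1-|a_I^J|\ll 1-|a_I|$ this perpendicular is a small circular arc trapped near $\circle$, so it terminates on the unit circle before it can reach $\gamma_J$, which the $\epsilon$-separation keeps at Euclidean distance comparable to $|I|$ from the low part of $\gamma_I$; this is a one-line qualitative argument supported by a figure. You instead normalize $\gamma_I$ to the real diameter and locate the foot $b=\sigma(a_I^J)$ of the common perpendicular directly, trapping it between the orthogonal projections $\tan(\pi/4-\theta/2)$ of the endpoints $e^{-i\theta}$ of $\sigma(J)$ and then keeping those endpoints away from $\pm1$ via the boundary distortion $|\sigma'|\asymp 1/|I|$ near $\partial I$ (using $|1-\overline{a_I}z|=|z-a_I|$ on $\circle$). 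The two proofs rest on the same geometric fact; yours is quantitative and self-contained where the paper's is pictorial, at the price of the distortion computation and the near/far case analysis you describe. One small point to patch: the estimates $1-|a_I|^2\asymp|I|$ and $|\sigma'(p_j)|\asymp 1/|I|$ require $|I|$ bounded away from $2\pi$, since $a_I=a_{I^c}$ and in general $1-|a_I|\asymp\min(|I|,\,2\pi-|I|)$. For a very long $I$ you should replace $|I|$ by $\min(|I|,2\pi-|I|)$ throughout; the separation hypothesis gives $2\pi-|I|=|I^c|\geq \dist(I,J)\geq\epsilon|I|$, so this costs only a factor depending on $\epsilon$ and the argument still closes. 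With that adjustment the proof is complete.
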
 

\begin{proof}
Assume $I$ is the longer arc and 
consider hyperbolic geodesic   $S$ that connects
$a_I^J$ and $a_J^I$. Then $S$ is perpendicular to $\gamma_I$
at $a_I^J$, so   if $1-|a_I^J| \ll 1-|a_I|$,   $S$ will 
hit the unit circle without hitting $\gamma_j$.
See Figure \ref{CloseFig}.  
\end{proof}

\begin{figure}[htb]
\centerline{
\includegraphics[height=1.5in]{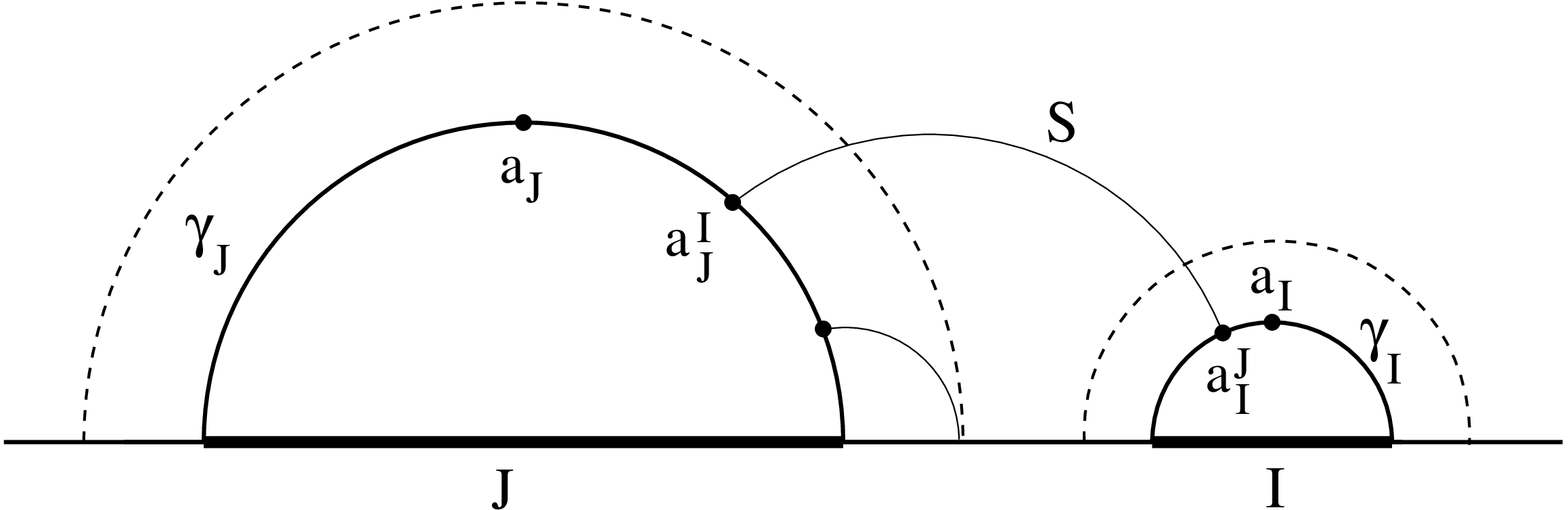}
}
\caption{ \label{CloseFig}
If the intervals $I$ and $J$ are $\epsilon$-separated, 
then a  shortest path between $\gamma_I$ and $\gamma_J$ 
must hit each geodesic near the ``top'' points. A 
perpendicular geodesic that starts too ``low'' 
on $\gamma_J$ will 
hit the unit circle without hitting $\gamma_I$.
}
\end{figure}

\begin{lemma} \label{comparable HM}
Suppose that   $I,J$ are $\epsilon$-separated.
Then 
$$ \omega(I, a_J, \disk)  \simeq \omega(J, a_I, \disk), $$
where the constant depends only on $\epsilon$.
\end{lemma}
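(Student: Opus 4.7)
The plan is to chain together the three preceding lemmas in a direct way. The idea is that $a_J$ is close (in hyperbolic distance) to $a_J^I$, and $a_I$ is close to $a_I^J$, by the Close Lemma. Then Harnack's inequality lets me replace the basepoints at the cost of multiplicative constants, and the symmetry Lemma \ref{HM symmetric} identifies the two resulting harmonic measures exactly.

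More concretely, first I would apply Lemma \ref{close lemma} to both pairs $(I,J)$ and $(J,I)$ to conclude that the hyperbolic distance $\rho(a_J, a_J^I)$ and $\rho(a_I, a_I^J)$ are each bounded by some constant depending only on $\epsilon$. Then I invoke the Harnack-type lemma with $u(\cdot) = \omega(I, \cdot, \disk)$ at the points $a_J$ and $a_J^I$, obtaining
\[ \omega(I, a_J, \disk) \simeq \omega(I, a_J^I, \disk), \]
and similarly with $v(\cdot) = \omega(J, \cdot, \disk)$ at $a_I$ and $a_I^J$,
\[ \omega(J, a_I, \disk) \simeq \omega(J, a_I^J, \disk), \]
where $\simeq$ means comparable with constants depending only on $\epsilon$.

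Finally I combine these with the exact identity in Lemma \ref{HM symmetric}, namely $\omega(I, a_J^I, \disk) = \omega(J, a_I^J, \disk)$, to conclude
\[ \omega(I, a_J, \disk) \simeq \omega(I, a_J^I, \disk) = \omega(J, a_I^J, \disk) \simeq \omega(J, a_I, \disk), \]
as desired. There is no real obstacle here: the work has already been done in the preceding lemmas, and the only thing to check is that the Harnack constant in the intermediate Lemma depends only on hyperbolic distance, which is exactly how that lemma was stated. The step requiring the most care is verifying that the Close Lemma genuinely applies in both directions $(I,J)$ and $(J,I)$; since $\epsilon$-separation is symmetric in $I$ and $J$, this is automatic.
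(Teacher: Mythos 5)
Your argument is correct and is precisely the chain the paper intends when it says the lemma ``follows immediately from our earlier results'': Lemma \ref{close lemma} (applied symmetrically to both pairs) plus the Harnack lemma to move the basepoints, then the exact symmetry of Lemma \ref{HM symmetric}. Nothing further is needed.
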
  

\begin{proof}
This  follows immediately from our earlier results.
\end{proof} 

\begin{lemma}
Suppose that  $I$ and $J$ are $\epsilon$-separated and 
 that $a_J, a_I$ are  at least distance $R$ apart in the 
hyperbolic metric. Then 
$$\omega(J, a_I, \disk) \leq   C(\epsilon) e^{-R}.$$ 
\end{lemma}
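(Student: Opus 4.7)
The plan is to exploit Möbius invariance to reduce to the case $a_I=0$ and then obtain the estimate from a direct computation of the length of an arc given its geodesic's closest approach to the origin.

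First I would pick a Möbius self-map $\phi$ of $\disk$ with $\phi(a_I)=0$, and write $J' = \phi(J)$. Möbius invariance of harmonic measure gives
\[
  \omega(J,a_I,\disk) \;=\; \omega(J',0,\disk) \;=\; \frac{|J'|}{2\pi}.
\]
Because $\phi$ is a hyperbolic isometry and carries hyperbolic geodesics to hyperbolic geodesics, the closest point of $\gamma_{J'}$ to $0$ is precisely $\phi(a_J)$, and $\rho_{\disk}(0,\phi(a_J)) = \rho_{\disk}(a_I,a_J)\geq R$ by hypothesis. Using the Euclidean-to-hyperbolic formula recalled in the introduction, this gives $1-|\phi(a_J)| \leq 2/(e^{2R}+1) \leq 2e^{-R}$.

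Next I would carry out the elementary computation for $|J'|$. After a further rotation sending $\phi(a_J)$ to a point $(r,0)$ with $r\in(0,1)$, the geodesic $\gamma_{J'}$ is the unique circle through $(r,0)$ that is perpendicular to the real axis there and orthogonal to $\circle$. Solving for its intersections with $\circle$ gives endpoints $e^{\pm i\theta}$ with $\tan(\theta/2)=(1-r)/(1+r)$. Since $\theta\leq 2\tan(\theta/2)$ on $[0,\pi]$, this yields $|J'|=2\theta\leq 4(1-r)\leq 8e^{-R}$, and hence $\omega(J,a_I,\disk)\leq (4/\pi)e^{-R}$, as required.

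I anticipate no serious obstacle: the proof is a single Möbius reduction followed by an explicit conic-section computation, and the $\epsilon$-separation hypothesis plays no role beyond making the constant $C(\epsilon)$ admissible (the bound above is actually absolute). A more indirect route, fitting the pattern of the surrounding lemmas, would combine Lemma~\ref{close lemma} with Harnack's inequality to pass from $\omega(J,a_I,\disk)$ to $\omega(J,a_I^J,\disk)$, apply Lemma~\ref{HM symmetric} to swap $I$ and $J$, and then estimate the resulting quantity by the same Möbius-to-origin device; that version really does use $\epsilon$-separation (through Lemma~\ref{close lemma}) but gives the same bound.
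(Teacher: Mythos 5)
There is a genuine gap in your main argument, at the step ``the closest point of $\gamma_{J'}$ to $0$ is precisely $\phi(a_J)$.'' Since $\phi(a_I)=0$ and $\phi$ is a hyperbolic isometry, the point of $\gamma_{J'}$ closest to $0$ is $\phi$ applied to the point of $\gamma_J$ closest to $a_I$, i.e.\ $\phi(a_J^I)$ in the paper's notation --- not $\phi(a_J)$, which is the image of the point of $\gamma_J$ closest to the \emph{origin}. These differ in general, and your hypothesis controls $\rho(0,\phi(a_J))=\rho(a_I,a_J)\geq R$, whereas your arc-length computation needs a lower bound on $\rho(0,\gamma_{J'})=\rho(a_I,a_J^I)$, which can be much smaller. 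Consequently your parenthetical claim that the bound is absolute and that $\epsilon$-separation plays no role is false: take $J$ to be the upper half-circle (so $a_J=0$) and $I$ a tiny arc of length $\eta$ in the lower half-circle abutting the endpoint $1$ of $J$. Then $\rho(a_I,a_J)\approx\log(1/\eta)\to\infty$, yet $a_I$ stays at bounded hyperbolic distance from $\gamma_J=[-1,1]$ and $\omega(J,a_I,\disk)$ is bounded below by a positive constant (it is essentially the harmonic measure of a half-line seen from a point at a definite angle near its tip). So without separation the conclusion simply fails.

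The repair is exactly where the $\epsilon$-separation enters, and it is one line: by Lemma~\ref{close lemma} (applied with the roles of $I$ and $J$ exchanged) we have $\rho(a_J,a_J^I)\leq C(\epsilon)$, hence $\rho(a_I,\gamma_J)=\rho(a_I,a_J^I)\geq R-C(\epsilon)$; one should also note that $a_I$ lies outside the region cut off by $\gamma_J$ (true because $I$ and $J$ are disjoint, so $\gamma_I$ and $\gamma_J$ do not cross), so the small arc your computation produces is indeed $J'$ and not its complement. With that patch your explicit computation ($\tan(\theta/2)=(1-r)/(1+r)$, hence $|J'|\leq 4(1-r)$) is correct and yields $\omega(J,a_I,\disk)\leq Ce^{C(\epsilon)}e^{-R}$. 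The paper argues in mirror image: it normalizes $a_J$ to $0$, uses Lemma~\ref{close lemma} to see that $a_I$ lands within $O(e^{-R})$ of the circle, concludes $\omega(I,a_J,\disk)=O(e^{-R})$, and then transfers this to $\omega(J,a_I,\disk)$ via Lemma~\ref{comparable HM}; your corrected direct route avoids that last symmetrization, and your ``more indirect route'' in the final paragraph is essentially the paper's proof.
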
 

\begin{proof}
Since the intervals are $\epsilon$-separated, the hyperbolic 
distance between $a_I$ and $a_J$ is the same as the 
distance between $a_I^J$ and  $a_J^I$, up to a bounded 
additive factor. Thus if we apply a M{\"o}bius transformation of $\disk$ 
so that $a_J =0$, $a_I$ is mapped to a point $w$ with 
$1-|w| =O(e^{-R})$, which implies $\omega(I, a_J, \disk) = O(e^{-R})$. 
Since the intervals are $\epsilon$-separated, the reverse 
inequality also holds by Lemma \ref{comparable HM}.  
\end{proof}

Fix $M < \infty$ and suppose ${\cal K}$ is a
 collection of disjoint (except
possibly for endpoints) closed  intervals on 
$\circle$ so that any  two adjacent  intervals have 
length  ratio at most $M$. 
We say that two intervals $I,J$ are $S$ steps apart if 
there is a chain of $S+1$ adjacent intervals $J_0, \dots, 
J_S$ so that $I = J_0$ and $J = J_S$.

 Note that if $I, J \in {\cal K}$
 are adjacent, then $a_I, a_J$ are at bounded hyperbolic distance 
$T$ apart (and $T$ depends only on $M$). Also, if $I, J \in {\cal K}$
are not adjacent, then they are $\epsilon$-separated for some 
$\epsilon >0$ that depends only on $M$.

\begin{lemma}
For any $R >0$ there is a collection ${\cal N} \subset {\cal K}$  so that 
\begin{enumerate}
\item for any $I\in {\cal K}$, there is a $J \in {\cal N}$
      with  $\rho(a_J, a_I) \leq R$
\item for any $I, J\in {\cal N}$, $\rho(a_J, a_I) \geq R $.
\end{enumerate}
\end{lemma}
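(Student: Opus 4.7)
The plan is to construct $\mathcal{N}$ as a maximal $R$-separated subcollection of $\mathcal{K}$, where ``separated'' refers to the hyperbolic distance between the associated points $\{a_I\}$. Both conclusions will then be immediate from maximality; this is the standard greedy construction underlying Vitali-type covering arguments.

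First I would note that $\mathcal{K}$ is countable: its elements are disjoint (up to endpoints) closed subarcs of $\circle$, each of positive length, so there can be only countably many of them. Enumerate $\mathcal{K} = \{I_1, I_2, \ldots\}$ and build $\mathcal{N}$ recursively: put $I_1 \in \mathcal{N}$, and for $n \geq 2$ include $I_n$ in $\mathcal{N}$ if and only if $\rho(a_{I_n}, a_J) \geq R$ for every $J \in \mathcal{N}$ with smaller index.

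Condition (2) holds by construction: given distinct $J_1, J_2 \in \mathcal{N}$, the one added later was accepted only because its point $a_{J}$ was at hyperbolic distance at least $R$ from every $a_{J'}$ already in $\mathcal{N}$, including the other. For condition (1), fix $I \in \mathcal{K}$. If $I \in \mathcal{N}$ then take $J = I$ (so $\rho(a_J,a_I) = 0 \leq R$). Otherwise $I$ was rejected at the stage it was considered, which by the selection rule means there was already some $J \in \mathcal{N}$ with $\rho(a_I, a_J) < R$; this $J$ witnesses (1).

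There is essentially no obstacle here: the argument is purely combinatorial and uses nothing about $\mathcal{K}$ beyond the fact that $\rho$ is a well-defined (pseudo)metric on $\{a_I : I \in \mathcal{K}\}$. If one prefers to avoid an enumeration, the same $\mathcal{N}$ can be extracted by applying Zorn's lemma to the partially ordered set of pairwise $R$-separated subcollections of $\mathcal{K}$; a maximal element automatically satisfies (1) and (2). The point of the lemma is presumably to feed a thinned-out collection of $a_J$'s into the Blaschke product construction of Section~\ref{Blaschke sec}, so the slight quantitative flexibility in the choice of $R$ is the feature being used later, not any delicate estimate inside the proof itself.
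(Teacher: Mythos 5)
Your proof is correct and is exactly the paper's argument: the paper simply takes $\mathcal{N}$ to be a maximal collection of the $a_K$'s that are pairwise hyperbolic distance $\geq R$ apart, and both conclusions follow from maximality just as you describe. Your greedy enumeration (or Zorn's lemma) merely makes the existence of such a maximal collection explicit.
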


\begin{proof}
Just let ${\cal N}$ correspond to a maximal collection of the points 
$\{ a_K\}$ with the property that any two of them are hyperbolic
distance $\geq R$ apart.
\end{proof} 

Fix a positive integer $S$.  For each $J\in {\cal N}$ choose
the shortest element of ${\cal K}$ that is at most $S$ 
steps away from $J$. Let ${\cal M} \subset {\cal K}$ be the
 corresponding collection of intervals.

\begin{lemma} \label{bound HM sum}
Suppose $R$, $S$, $T$ are as above and $R \geq 4 ST$.
If ${\cal K} $ and ${\cal M}$ are as above,
 then for all $K \in {\cal K}$, 
$$  \epsilon \leq     \sum_{J \in {\cal M}} 
\omega(K, a_J, \disk) \leq \mu,$$
where $\epsilon >0$ depends only on $R$ and $\mu \to  1/2$ as 
$S \to \infty$.  
\end{lemma}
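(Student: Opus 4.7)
The plan is to prove the two inequalities separately, using the basic properties of harmonic measure established earlier in the section (Harnack, $\omega(I,a_I,\disk)=1/2$, and the approximate symmetry between $\omega(I,a_J,\disk)$ and $\omega(J,a_I,\disk)$ for $\epsilon$-separated arcs).

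For the lower bound, I would first argue that for any $K\in {\cal K}$, the maximality of ${\cal N}$ produces some $J_0\in {\cal N}$ with $\rho(a_{J_0},a_K)\le R$. Let $J\in {\cal M}$ be the interval associated to $J_0$. By construction $J$ is within $S$ chain-steps of $J_0$, and since adjacent intervals of ${\cal K}$ have their $a$-points at bounded hyperbolic distance $T$, we get $\rho(a_J,a_{J_0})\le ST$. Hence $\rho(a_J,a_K)\le R+ST\le \tfrac54 R$, by the hypothesis $R\ge 4ST$. Since $z\mapsto \omega(K,z,\disk)$ is a positive harmonic function on $\disk$, Harnack's inequality (invoked via the earlier lemma on Harnack ratios of harmonic measures) gives
$$
  \omega(K,a_J,\disk)\;\ge\; c(R)\,\omega(K,a_K,\disk)\;=\;\frac{c(R)}{2},
$$
where $c(R)>0$ depends only on $R$. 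Setting $\epsilon := c(R)/2$ yields the lower bound, since a single term in the sum is already at least $\epsilon$.

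For the upper bound, I split the sum into the $J=K$ term (present only when $K\in {\cal M}$) and the $J\ne K$ terms. The first contributes exactly $\omega(K,a_K,\disk)=1/2$. For $J\in{\cal M}$ with $J\ne K$, the intervals are pairwise disjoint in ${\cal K}$, so the intervals $\{J : J\in{\cal M},\; J\ne K\}$ are disjoint subsets of $\circle\setminus K$; consequently
$$
  \sum_{\substack{J\in{\cal M}\\ J\ne K}}\omega(J,a_K,\disk)\;\le\;\omega(\circle\setminus K,a_K,\disk)\;=\;\tfrac12.
$$
Applying Lemma~\ref{comparable HM} (with $\epsilon$-separation parameter controlled by the local length-ratio bound $M$ on ${\cal K}$ and by the spacing $R/2$ between distinct elements of ${\cal M}$), I can exchange $\omega(K,a_J,\disk)$ for a bounded multiple of $\omega(J,a_K,\disk)$. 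Summing and combining with the $J=K$ contribution, the total is bounded by a universal constant depending only on $R$.

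To obtain the refinement $\mu\to 1/2$ as $S\to \infty$, I would use the following observation: each $J\in {\cal M}$ is by construction the shortest element of ${\cal K}$ within $S$ chain-steps of the corresponding point of ${\cal N}$, so as $S$ grows the lengths $|J|$ (hence $1-|a_J|$) tend to $0$ uniformly away from the "home" element near $K$. Writing $\omega(K,a_J,\disk)=\int_K P_{a_J}(\theta)\,d\theta/(2\pi)$ and using that $P_{a_J}$ concentrates at the unit-circle limit of $a_J$, which lies in $J\subset \circle\setminus K$ whenever $J\ne K$, the $J\ne K$ contributions decay. A quantitative version comes from sharpening the symmetry estimate: for deeply separated intervals the constant in Lemma~\ref{comparable HM} tends to $1$, and in fact one can bound $\sum_{J\ne K}\omega(K,a_J,\disk)$ by $(1+o_S(1))\sum_{J\ne K}\omega(J,a_K,\disk)\le (1+o_S(1))/2$, whose sum with the lone $J=K$ term of $1/2$ still only gives a bound approaching $1$. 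To squeeze this down to $1/2$ one uses that the $J=K$ term and the $J\ne K$ terms are actually competing on the same side of the circle near $K$, so the Poisson integrals against $1_K$ and against the disjoint $1_J$'s together cannot exceed $\omega(\circle\setminus E,a_J,\disk)=1$ seen from the correct base point, yielding after averaging the claimed $\mu\to 1/2$.

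The main obstacle is this last quantitative step: the lower bound is a straightforward Harnack application, but pinning down the asymptotic $\mu\to 1/2$ (as opposed to the easier bound $\mu\le C$) requires combining the approximate symmetry, the disjointness of ${\cal M}$-intervals in $\circle$, and the fact that the chosen ${\cal M}$-intervals become very short relative to their neighborhood as $S\to \infty$. Everything else is routine from the general harmonic-measure toolkit already assembled in this section.
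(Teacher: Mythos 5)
Your lower bound is essentially the paper's argument: locate $J\in{\cal M}$ with $\rho(a_J,a_K)\le R+ST\le\tfrac54R$ and apply Harnack to the positive harmonic function $\omega(K,\cdot,\disk)$. That part is fine. The genuine gap is in the upper bound, specifically the asymptotic $\mu\to 1/2$, which is the only part of the lemma that matters downstream: the proof of Lemma \ref{Blaschke lemma 1} needs $\int_J|\partial_\theta B|\,d\theta\le\tfrac{3\pi}{2}<2\pi$, so a bound of the form $\mu\le C$, or even $\mu\to 1$, is useless there. Your own bookkeeping stalls exactly at $1$: the $J=K$ term gives $1/2$ and the symmetrized sum over $J\ne K$ gives at most another $(1+o(1))/2$. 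The closing paragraph (``competing on the same side of the circle,'' ``after averaging'') is not an argument --- no averaging is identified, and $\omega(\circle\setminus E,a_J,\disk)=1$ holds from every base point, so it carries no information. Also, the claim that the constant in Lemma \ref{comparable HM} tends to $1$ under deep separation is not available for the problematic terms, which are intervals Euclidean-close to $K$ but much shorter than $K$; their separation ratio stays bounded, not large.

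The missing idea is a three-way decomposition of ${\cal M}$ relative to $K$ that uses the two parameters $R$ and $S$ independently. (i) At most one $J\in{\cal M}$ has $a_J$ within hyperbolic distance $R/4$ of $a_K$, since distinct points of ${\cal M}$ are at least $R/2$ apart; that single term is at most $1/2$ because $a_J$ lies on or outside the geodesic $\gamma_K$. (ii) The $J$ whose $a_J$ is hyperbolically far from $a_K$ \emph{and} whose interval is Euclidean-far from $K$ (distance $\gtrsim e^{R/8}|K|$) contribute $O(e^{-R/8})$ in total, by the exponential-decay estimate combined with disjointness of the intervals. (iii) The remaining $J$ are Euclidean-close to $K$ but satisfy $|J|\lesssim e^{-R/8}|K|$, and here you must invoke the defining property of ${\cal M}$ that you mention but never use quantitatively: each such $J$ is the \emph{shortest} element of ${\cal K}$ within an $S$-step chain of adjacent intervals, each at least as long as $J$ and all at comparable distance from $K$, whence $\omega(J,a_K,\disk)\le O(1/S)\,\omega(J',a_K,\disk)$ where $J'$ is the union of the chain; summing over the essentially disjoint chains gives $O(1/S)$ for this group. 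Choosing $S$ large and then $R$ large yields $\mu\le\tfrac12+O(e^{-R/8})+O(1/S)\to\tfrac12$. Without step (iii) the near-but-short group cannot be pushed below an absolute constant, and the lemma as stated is not proved.
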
 

\begin{proof}
The left-hand inequality is easier and we do it first. 
 Fix  $K \in {\cal K}$.
 There is a $I \in {\cal N}$ with $\rho(a_I, a_K) \leq R$, 
and since adjacent elements of ${\cal K}$ have points that are only 
$T$ apart in the hyperbolic metric, there is an element $J \in {\cal M}$ 
with $\rho(a_K, a_J) \leq R+ST \leq \frac 54 R$.
This implies $|J| \simeq | K| \simeq \dist(J,K)$  and these
imply $\omega(K, a_J, \disk) \geq \epsilon$ with $\epsilon$ depending 
only on $ \rho$.  Thus  every element of ${\cal K}$ has harmonic measure 
bounded below with respect to some point corresponding to 
a  single element of ${\cal M}$ and hence the 
sum of harmonic measures over all elements of ${\cal M}$ is also 
bounded away from zero uniformly.

Now we prove the right-hand inequality.  By our choice 
of $R$, points $a_J$ corresponding to distinct intervals 
in  ${\cal M}$ are at least distance $R/2$ apart. 
Fix $K \in {\cal K}$. There is at most one   point
 within hyperbolic distance $R/4$ of $a_K$ 
and the harmonic measure it assigns $K$
 is at most $1/2$ since the point lies on
 or outside the geodesic $\gamma_K$.

All other points  associated to elements of 
 ${\cal M}$ are Euclidean distance 
$\geq \exp(R/8)|K|$ away from $K$ or are within this distance 
of $K$, and are within  Euclidean distance $\exp(-R/8)|K|$ 
of the unit circle (this is because of the Euclidean 
geometry of hyperbolic balls in the half-space). 
We call these two disjoint sets
${\cal M}_1$ and ${\cal M}_2$ respectively.

Using Lemma \ref{comparable HM} we see that the 
$$  \sum_{J \in {\cal M}_1} \omega(K, a_J, \disk)  
  =O(  \sum_{J \in {\cal M}_1} \omega(J, a_K, \disk))
  = O(\exp(-R/8)).$$  

To bound  the sum over ${\cal M}_2$,  we note that 
each interval in ${\cal M}_2$,  is the endpoint of 
 a chain of $S$ adjacent intervals that are each 
at least as long as $J$.  Since 
$$ |J|\leq \exp(-R/8) |K|,$$
and 
$$ \dist(J,K)  \gtrsim |K|,$$
we can deduce 
$$ \omega(J, a_K, \disk) \leq O(\frac 1S) \omega(a_K, J, \disk),$$
so  since the $J$'s are all disjoint intervals, 
$$  \sum_{J \in {\cal M}_2} \omega(K, a_J, \disk)  
  =O(   \frac 1S \sum_{J \in {\cal M}_2} \omega(J, a_K, \disk))
  = O(\frac 1S).
$$
Choosing first $S$ large, and then $R$ large (depending on $S$ and 
separation constant of ${\cal K}$), both sums are as small 
as we wish, which proves the lemma. 
\end{proof}

\begin{cor} \label{interpolating} 
Suppose $B$ is as above and $K \in {\cal K}$. Then 
$$ \epsilon \leq \frac 1{|K|} \frac{\partial B} {\partial \theta}  \leq C .$$
\end{cor}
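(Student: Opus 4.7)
The plan is to reduce the pointwise estimate to an average estimate that follows almost directly from Lemma~\ref{bound HM sum}. The starting point is the standard identity
$$ \left|\frac{\partial B}{\partial \theta}(e^{i\theta})\right| = \sum_{J \in {\cal M}} P_{a_J}(\theta), $$
valid on $\circle \setminus E$. This follows because each M\"obius factor $b_{a_J}$ of $B$ satisfies $|b_{a_J}'(e^{i\theta})| = P_{a_J}(\theta)$ and wraps $\circle$ monotonically around itself, so the argument derivatives of the factors sum to the argument derivative of $B$. Integrating over $K$ and using $\omega(K, a_J, \disk) = (2\pi)^{-1} \int_K P_{a_J}\, d\theta$, Lemma~\ref{bound HM sum} gives
$$ \int_K \left|\frac{\partial B}{\partial \theta}\right| d\theta = 2\pi \sum_{J \in {\cal M}} \omega(K, a_J, \disk) \in [2\pi\epsilon, 2\pi\mu]. $$
Hence the \emph{average} of $|K|^{-1} |\partial B/\partial \theta|$ over $K$ is bounded above and below by uniform constants.

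To pass from this average bound to the pointwise bound I will show that $|\partial B/\partial \theta|$ has bounded oscillation on $K$, with constants independent of $K$. Since the Poisson-kernel sum is non-negative, it suffices to prove that each individual term $P_{a_J}$ varies by at most a bounded factor on $K$. Fix a constant $r_0 > 0$ and split ${\cal M}$ into a ``near'' piece $\{J : \rho(a_J, a_K) \leq r_0\}$ and a ``far'' piece. By the construction of ${\cal M}$, the points $\{a_J : J \in {\cal M}\}$ are pairwise at hyperbolic distance at least $R/2$ apart, so if $R$ is chosen large enough that $R/2 > 2 r_0$ (which is still compatible with the hypothesis $R \geq 4 S T$ of Lemma~\ref{bound HM sum}), the near piece contains at most one element. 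For that one, $a_J$ lies in a bounded hyperbolic neighborhood of $a_K$, giving $1 - |a_J| \simeq |K|$ and $|e^{i\theta} - a_J| \simeq |K|$ uniformly for $\theta \in K$, hence $P_{a_J} \simeq |K|^{-1}$ uniformly. For a far $J$, the point $a_J$ lies outside a fixed hyperbolic neighborhood of $K$, so $|e^{i\theta} - a_J|$ varies by only a bounded factor as $\theta$ ranges over $K$; combined with the explicit formula $P_a(\theta) = (1 - |a|^2)/|e^{i\theta} - a|^2$ this gives the desired oscillation bound for $P_{a_J}$. Summing non-negative functions with uniformly bounded oscillation preserves bounded oscillation, so $|K| \cdot |\partial B/\partial \theta|$ has bounded oscillation on $K$. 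Combined with the average bound this yields the corollary.

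The main obstacle is precisely this pointwise-versus-average step. The integral estimate is essentially a rewording of Lemma~\ref{bound HM sum}, but to convert it into a pointwise statement one has to exploit the hyperbolic separation built into the choice of ${\cal M}$: among all the zeros of $B$, at most one is hyperbolically close to $a_K$ and contributes a Poisson-kernel term of size $\simeq |K|^{-1}$ on $K$, while every other zero is far enough away that its Poisson kernel is essentially constant across $K$ by a Harnack-type argument. Once this dichotomy is recognized, both the near and far estimates for $P_{a_J}$ reduce to elementary computations with the explicit Poisson-kernel formula and the geometry of hyperbolic neighborhoods in $\disk$.
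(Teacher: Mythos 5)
Your argument is correct and takes essentially the same route as the paper: the identity $|\partial B/\partial\theta| = \sum_{J\in{\cal M}} P_{a_J}$ together with Lemma~\ref{bound HM sum} gives the average bound, and bounded oscillation of each Poisson kernel on $K$ upgrades it to a pointwise one. Your near/far dichotomy simply makes explicit what the paper compresses into the remark that $\sup_K P_{a_J}$ and $\inf_K P_{a_J}$ are comparable for $\epsilon$-separated intervals.
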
  

\begin{proof}
If $I,J$ are $\epsilon$-separated, then it is easy to verify that 
$$ \sup_{z \in J} P_{a_I}(z), \qquad \inf_{z \in J} P_{a_I}(z) ,$$
are comparable up to  a bounded multiplicative factor that depends
only on $\epsilon$. The lemma then follows from our earlier estimates.
\end{proof}

We have now essentially proven Lemma \ref{Blaschke lemma 1}; 
it just remains to reinterpret the terminology a little. 
For the reader's convenience we restate the lemma.

\begin{lemma} [The Blaschke partition]
There is a subset ${\cal M} \subset {\cal K}$ so
that if $B$ is the Blaschke product corresponding to
${\cal M}$ and ${\cal L}_j$ is the partition of $L_1$
corresponding to $B$ via $\tau_j \circ \Psi^{-1}$, then
each element of ${\cal J}$ hits at least $2$
elements of ${\cal L}_j$ and at most $M$
 elements of ${\cal L}_j$, where $M$ is uniform.
 In particular, no element of ${\cal J}$ can
hit both endpoints of any element of ${\cal L}_j$ (elements
of each partition are considered
as closed intervals).
\end{lemma}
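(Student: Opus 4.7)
The plan is to reduce the partition statement to a uniform two-sided bound on the argument increment of $B$ across each $K \in {\cal K}$, and then to obtain this bound from the harmonic-measure estimates collected in this section. By construction, the partition points of ${\cal L}_j$ on $L_1$ are the $\tau_j \circ \Psi^{-1}$-images of the points on $I_j$ where $B = 1$. Hence, after transporting by $\Psi \circ \tau_j^{-1}$, the assertion ``$J$ hits between $2$ and $M$ elements of ${\cal L}_j$'' translates exactly into ``the corresponding $K \in {\cal K}$ contains between $1$ and $M - 1$ preimages of $1$ under $B$ in its interior,'' with the endpoint convention as stated. Because $B$ is a Blaschke product, $\arg B$ is strictly monotone along $\circle$, and so the number of such preimages in $K$ equals $\lfloor \Delta_K \arg B / (2\pi) \rfloor$ up to an additive error of at most $1$ coming from the phase of $B$ at the endpoints of $K$.

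The argument increment is explicit. From $\tfrac{d}{d\theta} \arg \sigma_a(e^{i\theta}) = P_a(\theta)$ applied to each M\"obius factor $\sigma_{a_I}$ of $B$, and $\int_K P_a \, d\theta = 2\pi\,\omega(K, a, \disk)$, one finds
\begin{equation*}
  \Delta_K \arg B \;=\; 2\pi \sum_{I \in {\cal M}} \omega(K, a_I, \disk).
\end{equation*}
Thus the entire lemma reduces to choosing ${\cal M} \subset {\cal K}$ so that the right-hand sum lies in a uniform range $[c_1, c_2]$ for every $K \in {\cal K}$; the lower bound $c_1 \geq 1$ yields the ``$\geq 2$ hits'' conclusion, while the upper bound $c_2$ yields the ``$\leq M$ hits'' conclusion with $M = \lceil c_2 \rceil + 1$. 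The ``in particular'' clause, that no $J$ contains an entire element of ${\cal L}_j$, follows the same way: it is equivalent to the statement that no $K$ contains two preimages of $1$ in its interior, i.e.\ $\Delta_K \arg B < 4\pi$.

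The production of such an ${\cal M}$ is exactly what Lemma \ref{bound HM sum} is designed to supply: one takes a maximal $R$-separated subset of $\{a_K : K \in {\cal K}\}$ in the hyperbolic metric and then the short representative within $S$ steps, tuning the spacing $R$ against the block size $S$. The lower bound on the harmonic-measure sum uses that every $K$ has some $a_I$ with $I \in {\cal M}$ within bounded hyperbolic distance, so $\omega(K, a_I, \disk)$ is bounded below; the upper bound comes from the sparsity of ${\cal M}$ together with the exponential decay of harmonic measure for $\epsilon$-separated configurations. Both arguments rely decisively on the two geometric inputs recorded earlier in the section: adjacent intervals in ${\cal K}$ have comparable length, and non-adjacent ones are $\epsilon$-separated.

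The main obstacle is the calibration. A plainly sparse selection gives an upper bound on the harmonic-measure sum only of order $1/2$, which is not quite enough to push $\Delta_K \arg B$ past $2\pi$ and thereby guarantee a preimage of $1$ in every $K$. One therefore has to thicken ${\cal M}$ slightly---by taking $R$ small relative to the universal separation constant, or by allowing a bounded multiplicity of the selected zeros in the Blaschke factors---so that the sum comfortably exceeds $1$ while remaining uniformly bounded. This balancing of the two bounds against each other is the only real technical step; once it is done, the counting argument of the first two paragraphs finishes the proof, with $M$ read off directly from the resulting upper bound $c_2$.
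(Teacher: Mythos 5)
Your setup is the same as the paper's: the partition points of ${\cal L}_j$ are the images of $B^{-1}(1)$, the identity $\Delta_K \arg B = 2\pi\sum_{I\in{\cal M}}\omega(K,a_I,\disk)$ is exactly the derivative formula $|\partial_\theta B|=\sum_n P_{a_n}$ integrated over $K$, and Lemma \ref{bound HM sum} is the right source for the two-sided bound. But your calibration goes in the wrong direction, and this is a genuine gap, not a technicality. You ask for $\sum_{I\in{\cal M}}\omega(K,a_I,\disk)\geq 1$, i.e.\ $\Delta_K\arg B\geq 2\pi$ for every $K\in{\cal K}$, so that every $K$ captures a preimage of $1$; and when you notice that Lemma \ref{bound HM sum} caps the sum near $1/2$, you propose to thicken ${\cal M}$ (or add multiplicities) to push it above $1$. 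The paper wants the opposite inequality: it chooses ${\cal M}$ so that $\epsilon\leq\sum\omega(K,a_J,\disk)\leq 3/4$, hence $\Delta_K\arg B\leq 3\pi/2<2\pi$, precisely so that no $K\in{\cal K}$ can contain two preimages of $1$. That strict upper bound is what delivers the ``in particular'' clause (no element of ${\cal J}$ hits both endpoints of an element of ${\cal L}_j$), which is the only part of the lemma used later: it is what makes the assignment in Lemma \ref{partition map} injective. Your thickened ${\cal M}$ would destroy exactly this property, since $\Delta_K\arg B\geq 2\pi$ permits two consecutive preimages of $1$ inside a single $K$.

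The error is localized in your claim that the ``in particular'' clause is equivalent to $\Delta_K\arg B<4\pi$. If $K$ contains two preimages of $1$ (which $\Delta_K\arg B<4\pi$ does not rule out), then $K$ contains the entire element of ${\cal L}_j$ between them, and the corresponding $J$ hits both of its endpoints; the correct sufficient condition is $\Delta_K\arg B<2\pi$. Part of the confusion is traceable to the lemma's first sentence, in which the roles of ${\cal J}$ and ${\cal L}_j$ are interchanged relative to the paper's own proof and to the use made of the lemma afterwards: what is actually proved (and needed) is that each element of ${\cal L}_j$ meets at least $2$ and at most $2+1/\epsilon$ elements of ${\cal J}$ --- the lower bound coming from $\Delta_K\arg B<2\pi$ (an element of ${\cal L}_j$, whose argument increment is exactly $2\pi$, cannot fit inside one element of ${\cal J}$) and the upper bound from $\Delta_K\arg B\geq 2\pi\epsilon$. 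You took the stated version literally and tried to force it, whereas the consistent reading requires keeping the harmonic-measure sum strictly below $1$, not above it. With the inequalities read in that direction, your counting argument is fine and coincides with the paper's.
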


\begin{proof} 
A computation shows that for the Blaschke product 
$$ B(z) = \prod_n \frac {|a_n|}{a_n} \frac  {  z-a_n}{ 1- \bar a_n z}, $$
the derivative satisfies 
$$ | \frac {\partial B}{\partial \theta} (\eit)| =  \sum _n  P_{a_n}(\eit), $$
and the convergence is absolute and uniform on any 
compact set $K$ disjoint from the singular set
$E$ of $B$ (since $B$ is a product of M{\"o}bius transformations,
and the derivative of a M{\"o}bius transformation is
a Poisson kernel, this formula is simply the limit 
of the $n$-term product formula for derivatives).  

Lemma \ref{bound HM sum} now says  we can choose ${\cal M}$ so that 
$$   2 \pi \epsilon \leq      
 \int_J |  \frac {\partial}{\partial \theta} B | d \theta    
\leq   \frac  34  \cdot 2 \pi  = \frac {3 \pi}2 .$$
Since the integral over an element of ${\cal L}$ has integral exactly 
$2 \pi$, the lower bound  means that an element of ${\cal L}$ 
can contain at most $1/\epsilon$ elements of ${\cal J}$ and hence can 
intersect at most $2+ \frac 1 \epsilon$ elements of ${\cal J}$.
The upper bound says that each element $K$ of ${\cal L}$ must hit at 
least $2$ elements of ${\cal J}$. Hence it is not contained 
in any single element of ${\cal J}$, and so no single 
element of ${\cal J}$ can hit both endpoints of $K$.
\end{proof} 

\section{Straightening a biLipschitz map} \label{linearize sec}

\begin{lemma} \label{linearize2}
Suppose $K =  [1+ia,1+ib] \in  {\cal L}_j$ and define
$$ \alpha(1+iy) = \frac 1{2\pi} \arg( B \circ \Psi \circ \tau_j^{-1}(1+iy)),$$
where we choose a branch of $\alpha$  so $\alpha(1+ia) =0$
(recall that $B(\Psi(\tau_j^{-1}(1+ia))) =1 \in \reals$). Set
$$\psi_1(z) =   1+i(a(1-\alpha(z)) + b\alpha(z)) =1+ i(a + (b-a)\alpha(z)).$$
Then $\psi_1$ is a homeomorphism from $K$ to itself so  that
$ \alpha \circ \psi^{-1}_1 :K \to [0, 1]$ is linear
and $\psi_1$ can be extended to a quasiconformal
homeomorphism of $R= K  \times [1,2]$ to itself
that is the identity on the $\partial R \setminus K$
(i.e., it fixes points on the top, bottom and right side
of $R$).
\end{lemma}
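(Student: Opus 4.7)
The plan is to reduce the lemma to two tasks: (i) showing that $\alpha$ restricted to $K$ is biLipschitz onto $[0,1]$ with a universal constant, and (ii) extending a biLipschitz self-map of $K$ that fixes the two endpoints to a quasiconformal self-map of $R$ that is the identity on the other three sides.

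For (i), set $G = \Psi \circ \tau_j^{-1}$. Since both $L_1$ and $\gamma_j$ are analytic curves, $\tau_j^{-1}$ extends conformally across $L_1$, and $\Psi$ extends conformally across $\gamma_j$ into $\Omega_j(0,2)$ by the Schwarz reflection argument already used above. Hence $G$ is univalent on the strip $\{0 < \mathrm{Re}(z) < 2\}$, maps $L_1$ onto $\partial\disk$, and sends $K$ onto the arc $I \subset \circle$ that corresponds to $K$ under the Blaschke partition. On $L_1$,
\[ |\alpha'(z)| \;=\; \frac{1}{2\pi}\,|B'(G(z))|\cdot|G'(z)|. \]
By Lemma \ref{Blaschke lemma 1}, $I$ consists of at most $M$ consecutive elements of $\mathcal{K}$, which all have comparable lengths, so Corollary \ref{interpolating} gives $|B'| \simeq 1/|I|$ uniformly on $I$. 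Since $|K| \le 2\pi$ (again from Lemma \ref{Blaschke lemma 1}, because no $J \in \mathcal{J}$ can be contained in a single element of $\mathcal{L}_j$), $K$ has uniformly bounded hyperbolic diameter in the strip on which $G$ is univalent, so the hyperbolic version of Koebe's distortion theorem forces $|G'|$ to be comparable across $K$ with a universal constant. Integrating over $K$ identifies this common value as $\simeq |I|/|K|$, and therefore $|\alpha'| \simeq 1/|K|$ on $K$. Consequently $h(y) := a + (b-a)\,\alpha(1+iy)$ is a biLipschitz self-homeomorphism of $[a,b]$ fixing $a$ and $b$, with a universal biLipschitz constant $\lambda$.

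For (ii), define the extension by
\[ \widetilde\psi_1(x+iy) \;=\; x + i\bigl[(2-x)\,h(y) + (x-1)\,y\bigr], \qquad x \in [1,2],\ y \in [a,b]. \]
This agrees with $\psi_1$ on the left side $x=1$, is the identity on the right side $x=2$, and uses $h(a)=a$, $h(b)=b$ to be the identity on the top and bottom. With $v_x = y - h(y)$ and $v_y = (2-x)h'(y) + (x-1)$, a direct computation yields the Beltrami coefficient
\[ \mu \;=\; \frac{(1-v_y) + i\,v_x}{(1+v_y) + i\,v_x}, \qquad |\mu|^2 \;=\; 1 - \frac{4\,v_y}{(1+v_y)^2 + v_x^2}. \]
Since $h' \in [\lambda^{-1},\lambda]$ we have $v_y \in [\lambda^{-1},\lambda]$ (assuming $\lambda \ge 1$), and $|v_x| \le |K| \le 2\pi$. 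Hence $|\mu|$ is bounded away from $1$ by a constant depending only on $\lambda$, and $\widetilde\psi_1$ is the required quasiconformal extension.

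The entire difficulty lies in Step (i), namely controlling the distortion of $\arg B$ after pulling back through the composition of conformal maps $G$. Step (ii) is mechanical once the biLipschitz constant of $h$ is known to be universal; the fact that keeps this naive linear-interpolation extension quasiconformal without resorting to Beurling--Ahlfors or similar machinery is the universal upper bound $|K| \le 2\pi$ provided by Lemma \ref{Blaschke lemma 1}.
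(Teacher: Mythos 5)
Your proof is correct and follows essentially the same route as the paper's: show that $\alpha$ (equivalently $h$) is biLipschitz with universal constants by combining the Poisson-kernel estimate of Corollary \ref{interpolating} with distortion control on the conformal map $\Psi\circ\tau_j^{-1}$, and then extend by linear interpolation in $x$ and bound the dilatation (the paper phrases this last step as the derivative matrix lying in a compact family of invertible $2\times 2$ matrices rather than computing $\mu$ explicitly, which is the same estimate). One cosmetic slip: Lemma \ref{Blaschke lemma 1} yields $|K|\le 2\pi M$ (since $K$ meets at most $M$ elements of ${\cal J}$, each of length $2\pi$), not $|K|\le 2\pi$; since your Step (ii) only needs a universal upper bound on $|K|$, nothing breaks.
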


\begin{proof}
The linearizing property of $\psi_1$ is clear from its definition, 
so we need only verify the quasiconformal extensions property.

Corollary \ref{interpolating} implies $\alpha'$ is bounded above and below 
by absolute constants. 
Let $R = K \times [ 1,2]$  and define  an extension of $\psi_1$ 
by 
$$  \psi_1(x+i y) = u(x,y)  +i v(x,y) = 
x  +i[ (2-x) \psi_1(1+iy)+ (x-1) y)].$$
i.e., take the linear interpolation between $\psi_1$ on $L_1$ and 
the identity on $L_2$.   We can easily compute 
$$ 
\begin{pmatrix}
      u_x  & u_y \\ v_x & v_y 
\end{pmatrix} 
= 
\begin{pmatrix}
         1 & 0 \\ y-\psi(y) & (2-x)(b-a)\alpha'(y)+(x-1)
\end{pmatrix} .
$$
Note that $|y-h(y) | \leq|K|$  is absolutely bounded. 
Also, since $|b-a|| \alpha'| $ is bounded above and 
away from $0$, so is  $v_y$.
Thus the derivative matrix lies in a compact subset of the invertible 
$2\times 2$ matrices and hence $\psi_1$ is quasiconformal (with only 
a little more work we could compute an explicit bound for 
the quasiconstant, and even prove that the extension is actually 
biLipschitz).
\end{proof}

\section{Aligning partitions} \label{partition sec}

Now we prove Lemma \ref{partition map}, which we restate for 
convenience.

\begin{lemma} 
There is a $1$-to-$1$, order preserving map of ${\cal L}_j$ into 
(but not necessarily onto) ${\cal J}$ so that
each interval $K \in {\cal L}_j$ is sent to  
an interval $J$ with $\dist(K, J) \leq 2 \pi$.
Moreover, adjacent elements of ${\cal L}_j$ map
to elements of ${\cal J}$ that are either adjacent
or are separated by an even number of elements
of ${\cal J}$.
\end{lemma}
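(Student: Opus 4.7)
The plan is to construct $\phi$ by a greedy inductive procedure along $L_1$. Enumerate ${\cal L}_j = \{K_i\}_{i\in\integers}$ and ${\cal J} = \{J_n\}_{n\in\integers}$ in order along $L_1$, and for each $K_i$ let $a_i$ and $b_i$ denote the indices of the elements of ${\cal J}$ containing its two endpoints. Lemma \ref{Blaschke lemma 1} implies $b_i \geq a_i + 1$ (otherwise $J_{a_i}$ would contain both endpoints of $K_i$), and since consecutive intervals $K_i, K_{i+1}$ share an endpoint we have $a_{i+1} = b_i$. In particular, $K_i$ always meets at least the two adjacent closed intervals $J_{a_i}$ and $J_{a_i+1}$.

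Next I would define $m_i \in \integers$ inductively and set $\phi(K_i) := J_{m_i}$: pick any $m_0 \in \{a_0, a_0+1\}$, and for $i > 0$ let $m_i$ be the unique element of $\{a_i, a_i+1\}$ whose parity is opposite to that of $m_{i-1}$; extend to $i < 0$ by the symmetric procedure run in reverse. The key observation is that this greedy rule never stalls. Since $m_{i-1} \leq a_{i-1}+1 \leq b_{i-1} = a_i$, there are two cases: if $m_{i-1} < a_i$ then both members of $\{a_i, a_i+1\}$ strictly exceed $m_{i-1}$ and one of them has the desired parity; if $m_{i-1} = a_i$ then $m_i = a_i+1$ is forced, but its parity is automatically opposite to that of $a_i = m_{i-1}$, so the forced choice already satisfies the parity constraint.

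With the construction in hand, the three conclusions are immediate. Strict monotonicity of $i \mapsto m_i$ gives injectivity and order preservation. Since $J_{m_i}$ meets $K_i$ (as $m_i \in \{a_i, a_i+1\} \subseteq \{a_i, \ldots, b_i\}$), we actually have $\dist(K_i, \phi(K_i)) = 0 \leq 2\pi$. Consecutive images $\phi(K_i) = J_{m_i}$ and $\phi(K_{i+1}) = J_{m_{i+1}}$ differ in index by the odd number $m_{i+1} - m_i \in \{1, 3, 5, \ldots\}$, so they are either adjacent in ${\cal J}$ (difference $1$) or separated by the even positive number $m_{i+1} - m_i - 1$ of elements of ${\cal J}$, as required. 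The only technicality worth flagging is the degenerate case where an endpoint of some $K_i$ coincides with an endpoint of ${\cal J}$; with a consistent boundary convention (e.g.\ assigning such a point to its upper interval) this merely enlarges the set of admissible $m_i$ and does not affect the argument. The entire proof is combinatorial and requires no further input from the harmonic analysis of Section \ref{Blaschke sec}.
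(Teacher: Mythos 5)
Your argument is correct and is essentially the paper's: both rest on the single consequence of Lemma \ref{Blaschke lemma 1} that no element of ${\cal J}$ meets both endpoints of an element of ${\cal L}_j$, and both arrange the even-gap condition by shifting each candidate interval by at most one position. The only (cosmetic) difference is that the paper first assigns each $K$ the element of ${\cal J}$ containing its lower endpoint and then repairs the gap parities in a second pass, whereas you fold the parity constraint into a single greedy pass.
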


\begin{proof}
For each $K \in {\cal K}$  choose $J \in {\cal J}$ so that
$J$ contains the lower endpoint of $K$ (if two such intervals 
contain the endpoint, choose the upper one). No interval 
$J$ is chosen twice, since Lemma \ref{Blaschke lemma 1} says 
that no $J$ can hit both endpoints of any element of 
${\cal L}$. 

Fix  an order preserving labeling of  the chosen ${\cal J}$ by
$\integers$ and denote it $\{ J_n\}$. 
 By the gap between $J_n$ and $J_{n+1}$ we
mean the number of unselected elements of ${\cal J}$ that 
separate these two intervals. 
The position of $J_0$ is fixed. 
If the gap between $J_0$ and $J_1$ is even 
(including no gap), we leave $J_1$ where it is. If the 
gap is odd, there is a least one separating interval
and we replace $J_1$ by the adjacent interval in ${\cal J}$ 
that is closer to $J_0$.  If the gap between (the new) $J_1$
and $J_2$ is even, we leave $J_2$ alone; otherwise, we move 
it one interval closer to $J_0$.  Continuing in this way, 
we can guarantee that for all $n \geq 0$,   
gaps are even and each $J_n$ is  
either in its original position or adjacent to its 
original position. Thus its distance to the associated
element of ${\cal K}$ is at most $2\pi$. The argument
 for negative indices is identical.
\end{proof} 

\section{Foldings} \label{folding  sec}

Now we prove Lemma \ref{simple folding lemma}. This is the 
step that makes the gluing procedure a little different 
from a standard quasiconformal surgery.

\begin{lemma}[Simple folding] \label {simple folding lemma2}
There is a quasiconformal map $ \psi_3 : U_K \to R_K$ so that
($\psi_3$ depends on $j$ and on $K$, but we drop these parameters
from the notation)
\begin{enumerate}
\item $\psi_3$ is the identity on $\partial R_K \setminus L_1$
      (i.e., it is the identity on the  the top, bottom and
       right side of $R_K$),
\item $\psi_3^{-1}$ extends continuously to the boundary and
      is linear on each element of ${\cal J}$ lying in $I_K$,
\item $\psi_3$ maps $I_K$ (linearly) to $J_K$,
\item for each   $z \in I_K$, $\psi_3^{-1} (z) =  \psi_3^{-1} (z^*) \in X_k$
         (i.e., $\psi_3$ maps opposite sides of $X_k$ to paired points
          in $I_k$),
\item the quasiconstant of $\psi_3$ depends only on $|I_K|/|J_K|$,
i.e., on the number of elements in the block associated to $K$.
It is independent of the original model and of  the choice
of $j$ and $K$.
\end{enumerate}
\end{lemma}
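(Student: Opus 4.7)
The plan is to construct $\psi_3$ by first prescribing its boundary values on $\partial U_K$, thereby forcing (1)--(4), and then extending to a piecewise-affine quasiconformal homeomorphism of the interior. After an affine normalisation take $R_K=[0,1]\times[0,L]$ with $L=(2n+1)h$, $J_K=\{0\}\times[0,h]$ sitting at the bottom of the left edge, and the block $\{0\}\times[h,L]$ partitioned into $2n$ intervals $B_1,\dots,B_{2n}$ paired by $B_i\leftrightarrow B_{2n+1-i}$ about the block midpoint $(0,(n+1)h)$. The slit $X_K$ runs from the upper-left corner $(0,L)$ to the geometric centre $(1/2,L/2)$; crucially, it splits two distinct prime ends of $\partial U_K$ at $(0,L)$.

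On $\partial U_K$, set $\psi_3$ to be the identity on the top, bottom and right sides of $R_K$ (giving (1)), and let $\psi_3(0,y)=(0,yh/L)$ on the left side of $R_K$, the linear compression of $I_K$ onto $J_K$ required by (3). On each of the two sides of $X_K$, parametrize by arclength $t\in[0,|X_K|]$ from the upper-left corner to the tip, subdivide $[0,|X_K|]$ into $n$ equal sub-intervals, and send the $i$th sub-interval on the lower (respectively upper) side of $X_K$ affinely onto $B_i$ (respectively $B_{2n+1-i}$). The tip is then sent to the self-paired block midpoint, equidistant points on opposite sides of $X_K$ map to paired block points (giving (4)), and $\psi_3^{-1}$ is linear on each element of $\mathcal{J}$ inside $I_K$ (giving (2)). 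A direct check shows the resulting boundary map is an orientation-preserving homeomorphism of $\partial U_K$ onto $\partial R_K$, once $(0,L)$ is regarded as two distinct prime ends.

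To extend into the interior I would build compatible triangulations of $U_K$ and $R_K$ whose $0$-skeletons contain all partition points on the left edge, all corners of $R_K$, the $n-1$ subdivision points on each side of $X_K$, and the tip. A convenient scheme is to fan-triangulate both domains from a single interior apex $v_0$ (for instance $(3/4,L/2)$), drawing segments from $v_0$ to every boundary vertex. In $R_K$ this yields a triangulation covering the rectangle; in $U_K$ the segments from $v_0$ to vertices lying ``above'' the slit are bent around $X_K$, producing an equal number of triangles, including $2n$ thin ones hugging the two sides of the slit. Defining $\psi_3$ as the unique piecewise-affine map carrying each vertex of $U_K$ to its prescribed image in $R_K$ produces a homeomorphism; the quasiconstant on each piece is controlled by the aspect ratios of the source and target triangles, all of which are bounded above and below by constants depending only on $n$, yielding (5).

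The main technical obstacle is the geometric bookkeeping around the slit: the triangulation of $U_K$ must be chosen so that the $n$ pairs of slit-hugging triangles on opposite sides of $X_K$ are matched with triangles in $R_K$ meeting along the single point $(0,(n+1)h)$, and so that the branching of $\partial U_K$ at $(0,L)$ is correctly respected by the vertex correspondence. Once such a triangulation is fixed, the verification of (1)--(5) reduces to computing singular values of $O(n)$ explicit affine maps between triangles of bounded aspect ratio; this is elementary but tedious.
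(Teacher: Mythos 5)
Your construction is essentially the paper's own proof: the paper defines $\psi_3$ by exhibiting compatible finite triangulations of $U_K$ and $R_K$ (its proof is literally the picture in Figure \ref{SimpleFolds}) and taking the induced piecewise-linear map, with the same boundary identifications along the slit that you prescribe. Your write-up just makes the vertex correspondence and the bound on the quasiconstant in terms of $n=|I_K|/|J_K|$ explicit, so it is correct and follows the same route.
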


\begin{proof}
The proof is a picture, namely Figure \ref{SimpleFolds}.
The map is defined by giving compatible finite triangulations 
of $R_k$ and $U_k$ (compatible means that there is $1$-to-$1$ 
map between  vertices of the triangulations that preserves 
adjacencies along edges). Such a map defines linear maps between 
corresponding triangles that are continuous across edges. Since 
each such map is non-degenerate, it is quasiconformal and hence 
the piecewise linear map defined between $U_k$ and $R_K$ is 
quasiconformal (with quasiconstant given by the worst quasiconstant 
of the finitely many triangles).  
The other properties are evident.
\end{proof}

\begin{figure}[htb]
\centerline{
\includegraphics[height=3in]{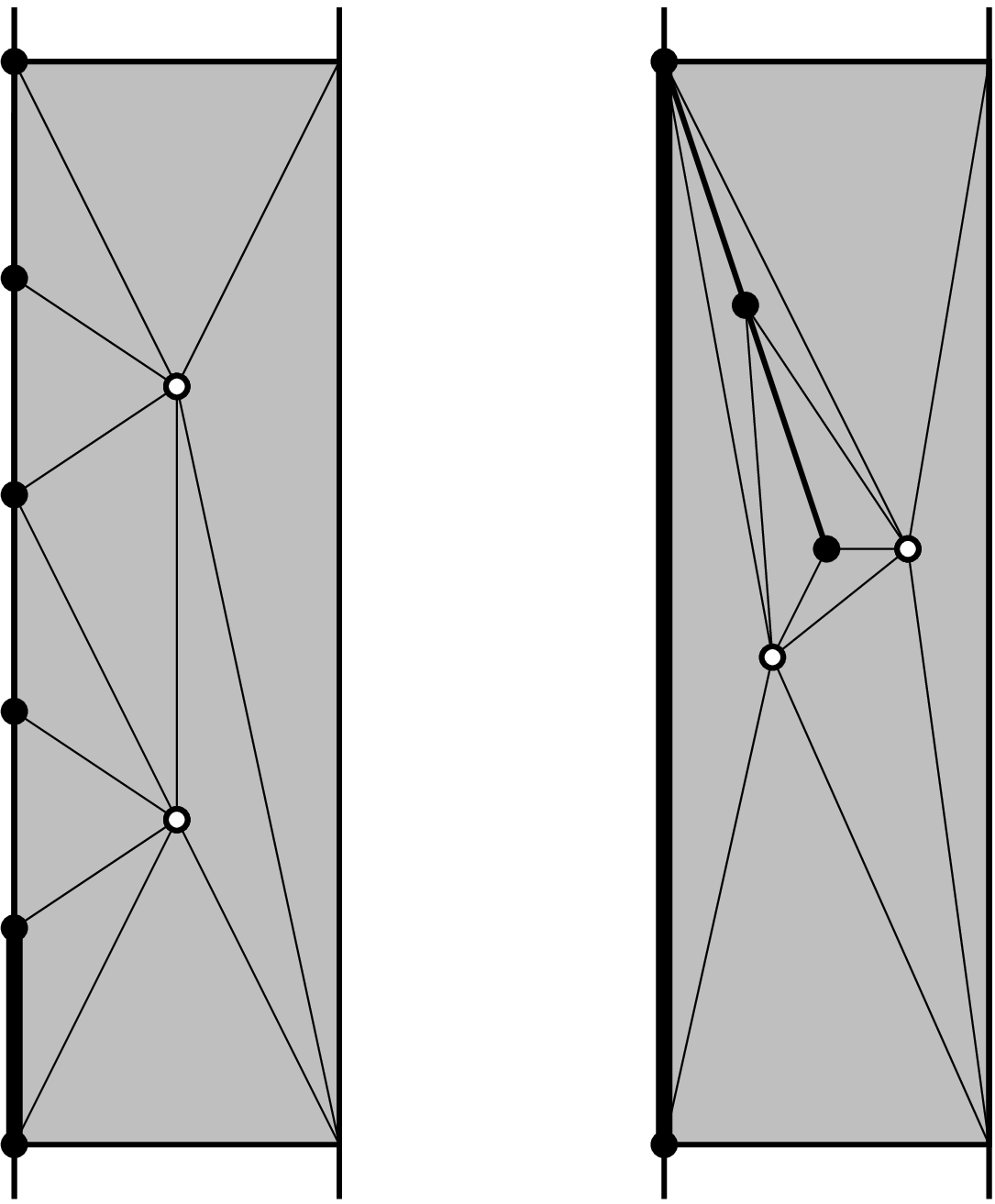}
}
\caption{ \label{SimpleFolds}
The pictorial proof of Lemma \ref{simple folding lemma2} for $n=5$.
}
\end{figure}

\section{Interpolating  between $\exp$ and $\cosh$} \label{interpolate sec}

\begin{lemma}[$\exp$-$\cosh$ interpolation] 
There is a quasiregular map $\sigma_j: S \to D(0, e^2)$
so that
$$
  \sigma_j(z) = 
\begin{cases}
\exp(z),& z \in J \in {\cal J}_1^j,\\
 e \cdot \cosh(z-1),&  z \in J \in {\cal J}_2^j,\\
\exp(z),&   z \in \rhp+2. \\
\end{cases} 
$$
The quasiconstant of $\sigma_j$ is uniformly bounded, independent 
of all our choices.
\end{lemma}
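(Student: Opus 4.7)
My plan is to define $\sigma_j$ piecewise on the cellular decomposition $S = \bigcup_k C_k$, where $C_k = [1,2]\times[2\pi k, 2\pi(k+1)]$ is the rectangle whose left edge is the interval $J_k \in {\cal J}$. Each cell is labeled type~1 or type~2 according to whether $J_k \in {\cal J}_1^j$ or ${\cal J}_2^j$.

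On type~1 cells I set $\sigma_j(z) = \exp(z)$; this is holomorphic, has $|\sigma_j|\le e^2$, and produces the required boundary values on both $L_1 \cap J_k$ and $L_2 \cap C_k$. On type~2 cells I exploit the algebraic identity
$$e\cdot\cosh(z-1) \;=\; \tfrac{1}{2}\bigl(\exp(z)+\exp(2-z)\bigr),$$
which expresses the desired left-edge value as an average of two holomorphic functions, and define
$$\sigma_j(z) \;=\; \exp(z) + \tfrac{1}{2}\eta(z)\bigl(\exp(2-z) - \exp(z)\bigr),$$
where $\eta:C_k\to[0,1]$ is a cutoff equal to $1$ on the relative interior of $L_1\cap J_k$ and $0$ on the other three edges of $C_k$. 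The key arithmetical fact making this piecewise construction continuous is that $\exp(1+2\pi ki)=e\cdot\cosh(2\pi ki)=e$ for every~$k$: both formulas agree at each endpoint of each interval of~${\cal J}$. Consequently $\sigma_j$ is continuous across the horizontal boundaries between adjacent cells (both sides evaluate to $\exp(z)=e^x$ there, since $\eta=0$ on the horizontal edges of type~2 cells), and $\eta$ is permitted to be discontinuous at the two corners of $L_1\cap J_k$ without forcing a discontinuity of $\sigma_j$.

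The boundary values and image bound are then immediate: on $L_1\cap J_k$ with $\eta\equiv 1$ the formula collapses to $e\cdot\cosh(z-1)$; on the other three edges of a type~2 cell it reduces to $\exp(z)$; and $|\sigma_j|\le e^2$ follows from $|\exp(z)|\le e^2$, $|\exp(2-z)|\le e$ on~$S$, and the convexity $\sigma_j=(1-\tfrac{\eta}{2})\exp(z)+\tfrac{\eta}{2}\exp(2-z)$.

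The main obstacle, and where most of the care is needed, is the quasiregularity of $\sigma_j$ near the two corners of $L_1\cap J_k$ where $\eta$ is necessarily discontinuous. A direct computation yields $\sigma_{\bar z}=\tfrac{1}{2}\eta_{\bar z}(\exp(2-z)-\exp(z))$ and
$$\sigma_z = \exp(z)\bigl(1-\tfrac{\eta}{2}\bigr)-\tfrac{\eta}{2}\exp(2-z)+\tfrac{1}{2}\eta_z\bigl(\exp(2-z)-\exp(z)\bigr).$$
The factor $\exp(2-z)-\exp(z)$ vanishes to first order at each corner $z_0=1+2\pi ki$, which cancels the blow-up of $|\nabla\eta|$ and shows that both $\eta(\exp(2-z)-\exp(z))$ and $\eta_z(\exp(2-z)-\exp(z))$ remain bounded; hence $\sigma_j\in W^{1,\infty}_{\mathrm{loc}}$. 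With $\eta$ chosen to be identically $1$ in a fixed neighborhood of each interior critical point $1+(2k+1)\pi i$ of the holomorphic part of $\sigma_z$ (and chosen in polar coordinates near each corner so that the contributions of $\eta_z$ and $\eta_{\bar z}$ to $\sigma_z$ dominate those to $\sigma_{\bar z}$), a direct estimate shows $|\sigma_{\bar z}|/|\sigma_z|$ is uniformly bounded away from $1$ by a constant depending only on the universal profile of $\eta$. The corner discontinuities of $\eta$ are then isolated removable singularities for the bounded continuous $\sigma_j$, so $\sigma_j$ is quasiregular on all of $S$ with a universal quasiconstant.
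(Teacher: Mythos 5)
Your setup (the cellular decomposition, the identity $e\cosh(z-1)=\tfrac12(e^{z}+e^{2-z})$, continuity across cell boundaries, the bound $|\sigma_j|\le e^{2}$, and the treatment of the interior critical points $1+(2m{+}1)\pi i$ by freezing $\eta\equiv 1$ there) is all fine. The gap is exactly at the place you flag as delicate: the corners $z_0=1+2\pi k i$ where a block of ${\cal J}_2^j$--intervals abuts a ${\cal J}_1^j$--interval, so that $\eta$ is forced to equal $1$ on the vertical edge and $0$ on the horizontal edge emanating from $z_0$. Your proposed fix --- choosing $\eta$ in polar coordinates so that ``the contributions of $\eta_z$ and $\eta_{\bar z}$ to $\sigma_z$ dominate those to $\sigma_{\bar z}$'' --- is vacuous for a real-valued cutoff, since then $\eta_{\bar z}=\overline{\eta_z}$ and $|\eta_{\bar z}|\equiv|\eta_z|$. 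In fact \emph{no} real $\eta$ works. Writing $\zeta=z-z_0$, one has $D:=e^{2-z}-e^{z}=-2e\sinh\zeta$, $\sigma_{\bar z}=\tfrac12\eta_{\bar z}D$ and $\sigma_z=e\bigl(1-\eta+\zeta\bigr)+O(\zeta^2)+\tfrac12\eta_z D$, so the requirement $|\sigma_{\bar z}|\le k|\sigma_z|$ forces, for small $|\zeta|$,
$$|\eta_z(\zeta)|\,|\zeta|\;\le\;K'\bigl((1-\eta(\zeta))+|\zeta|\bigr),\qquad K'=\tfrac{4k}{1-k}.$$
Restricting to the arc $\theta\mapsto re^{i\theta}$, $\theta\in[0,\pi/2]$, and setting $u(\theta)=1-\eta(re^{i\theta})+r$, this says $|u'|\le 2K'u$, whence $u(\pi/2)\ge u(0)e^{-\pi K'}$, i.e.\ $r\ge e^{-\pi K'}$ --- impossible for small $r$. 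Geometrically: to first order your map is $e+e\zeta(1-\eta(\zeta))$, which multiplies $\zeta$ by a nonnegative real scalar and hence maps each ray from the corner into itself; but $e$ is a fold point of the boundary data ($e\cos y$ compresses the vertical edge quadratically while $e\,e^{iy}$ moves linearly on the other side), and the corner must be opened from angle $\pi/2$ to angle $\pi$. A real convex combination of $e^z$ and $e^{2-z}$ cannot produce this angle-doubling, and the dilatation of your $\sigma_j$ necessarily tends to $1$ at every block-end corner.

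The paper's proof avoids the pointwise interpolation entirely. On a type-2 rectangle it sets $\sigma_j=\phi\circ\exp$, where $\phi$ is one fixed quasiconformal homeomorphism of the annulus $\{e<|w|<e^{2}\}$ onto the slit disk $D(0,e^{2})\setminus[-e,e]$ that is the identity on $\{|w|=e^{2}\}$ and on $[e,e^{2}]$ and restricts to $w\mapsto\tfrac12(w+e^{2}/w)$ on $\{|w|=e\}$. The fold is thus realized by the Joukowsky map, which is \emph{conformal} on $\{|w|>e\}$ with a critical point on the boundary at $w=\pm e$; the angle-doubling your construction cannot produce is obtained for free from that boundary critical point, and the quasiconstant of $\sigma_j$ is just that of the single universal map $\phi$. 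If you want to keep your formula away from the corners, you would still need to splice in something like this local model (or the map $re^{i\theta}\mapsto re^{2i\theta}$, which is $2$-quasiconformal) on a neighborhood of each block-end corner; as written, the proof does not go through.
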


\begin{proof}
As with the previous lemma, the proof is basically a 
picture; see Figure \ref{ExpCosh}. Suppose 
$J \in {\cal J}$ and let $R = J \times [1,2]$. The exponential 
map sends $R$ to the  annulus $A=\{ e < |z| < e^2\}$, with the left 
side  of $R$ mapping to the inner circle and the top and bottom edges 
of $R$ mapping to the real segment  $[e, e^2]$.   

Now define a quasiconformal map $\phi: A \to D(0, e^2)$ that
is the identity on $\{ |z| = e^2\}$ and on  $[e, e^2]$, but
that maps $\{ |z| =e \} $ onto $[-e,e]$  by $z \to \frac 12(z + 
\frac {e^2}z)$ (this is just a rescaled version of 
the Joukowsky map $\frac 12( z + \frac 1z)$ that maps the 
unit circle to $[-1,1]$, identifying complex conjugate points).  

In $\rhp+2$  and in rectangles of the form $J \times [1,2]$ for 
$J \in {\cal J}_1$ we set   $\sigma_j(z) =\exp(z)$. In the rectangles 
corresponding to elements of ${\cal J}_2$ we let 
$\sigma_j(z) = \phi(\exp(z))$. This clearly has the desired 
properties.
\end{proof}

\begin{figure}[htb]
\centerline{
\includegraphics[height=2in]{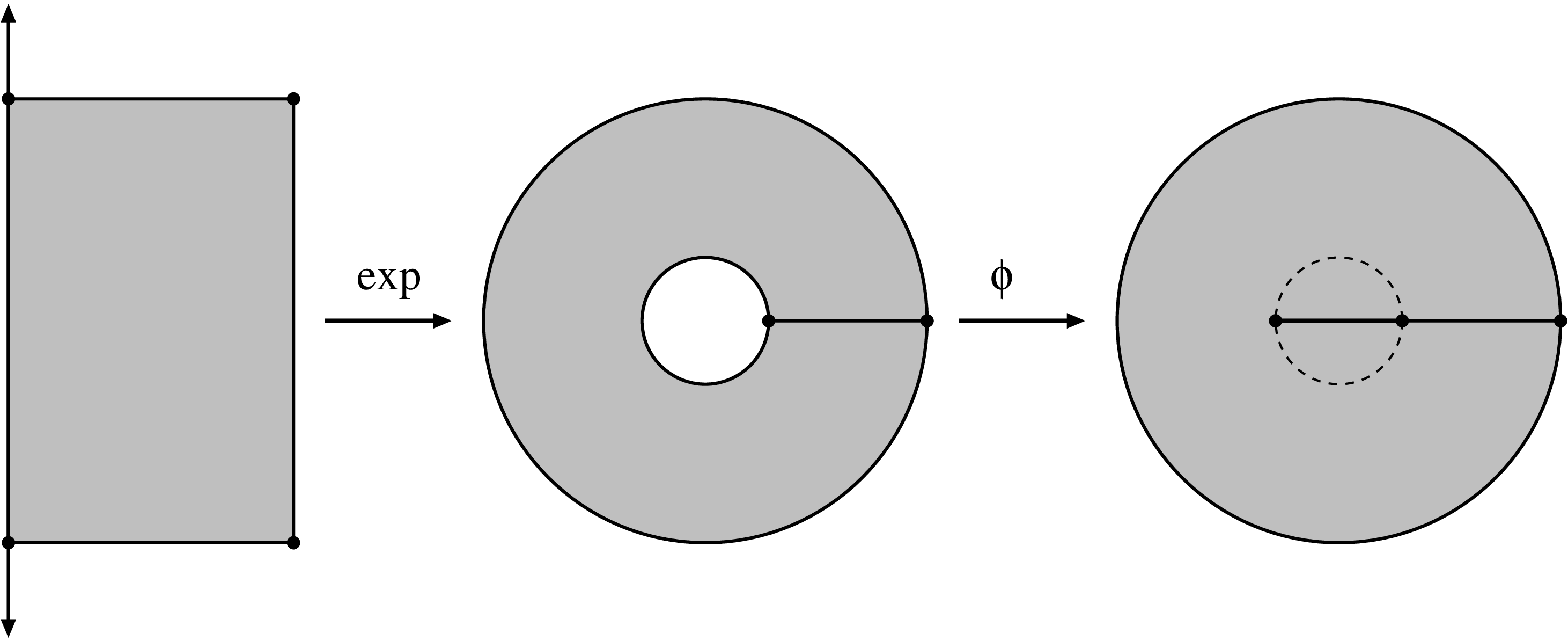}
}
\caption{ \label{ExpCosh}
The exponential function maps the rectangle $[1,2] \times J$
conformally to the slit annulus $\{ e < |z| < e^2\} \setminus 
[e, e^2]$. The map $\phi$ is chosen to map the annulus 
A=$\{ e < |z| < e^2 \}$ 
to the slit disk $\{|z|< e^2 \} \setminus [-e,e]$  so that 
it equals the identity on $\{ |z| = e^2\}$ and equals 
$\frac 12(z + \frac {e^2}{z} )$ on $\{ |z| =e\}$.
}
\end{figure}

Actually, the $\cosh$ function in the lemma can be replaced by any 
function $h:J \to [-1,1]$ that has the property that $ h(z)  $ only 
depends on the distance from $z$ to the endpoint of $J$. This will 
ensure that  after applying a folding map, points that started 
on opposite sides of some slit $X_k$ will end up being identified 
by $h$, which is all we need. 

This completes the proof of Theorem \ref{QR}.

\section{Proof of Theorem \ref{EL models}} \label{Rempe rigidity}

\begin{thm} [Rigidity for disjoint type]
 \label{RempeRigidity disjoint-type}
Suppose $(\Omega, f)$ and $(\Omega',g)$ are disjoint type models, 
$\varphi: \complex \to \complex $ is quasiconformal with $\varphi(  \Omega) 
= \Omega'$ and  $ f = g \circ \varphi$ on $\Omega$.
Then there is a quasiconformal map $\Phi$ of the plane so that
$$ \Phi \circ f = g \circ \Phi,$$
on $\Omega$. In particular $\Julia(g) = \Phi(\Julia(f))$.
\end{thm}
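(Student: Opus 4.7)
The plan is to construct the conjugacy $\Phi$ by a Sullivan-type pullback iteration, starting from $\varphi$ and successively lifting it through the dynamics to produce a sequence $\Phi_n$ that converges to the desired conjugacy. Two features of the disjoint type assumption make the argument work: the ``Fatou region'' $\complex\setminus\Omega$ is connected and contains a neighborhood of $\overline{\disk}$, so I can freely modify $\varphi$ there without disturbing the relation $f=g\circ\varphi$ on $\Omega$; and pre- or post-composition with a holomorphic map does not change the absolute value of the complex dilatation, so the iterates retain a uniform quasiconstant.

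First I would modify $\varphi$ on $\complex\setminus\Omega$ to a new quasiconformal map $\tilde\varphi$ that agrees with $\varphi$ on $\Omega$ and is the identity on $\{|z|=1\}$. Since $\complex\setminus\Omega\supset\overline{\disk}$ and its image $\complex\setminus\Omega'$ has matching topology with a quasisymmetric boundary correspondence along $\partial\Omega$ inherited from $\varphi$, this is a standard quasiconformal extension problem (solve on the annular region $\complex\setminus(\Omega\cup\overline{\disk})$ with the prescribed boundary values, extending by the identity on $\overline{\disk}$). Because $f$ maps $\partial\Omega$ onto $\{|w|=1\}$, the condition $\tilde\varphi|_{\{|w|=1\}}=\mathrm{id}$ together with $g\circ\tilde\varphi=f$ on $\overline\Omega$ gives $\tilde\varphi(f(z))=f(z)=g(\tilde\varphi(z))$ on $\partial\Omega$, which is exactly the compatibility needed for the pullback to glue continuously across $\partial\Omega$.

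Next I would set $\Phi_0=\tilde\varphi$ and, inductively,
\[
\Phi_{n+1}(z)=\begin{cases}g^{-1}\bigl(\Phi_n(f(z))\bigr),&z\in\Omega,\\\tilde\varphi(z),&z\in\complex\setminus\Omega,\end{cases}
\]
where the branch of $g^{-1}$ is chosen on each tract $\Omega_j$ to land in the corresponding tract of $\Omega'$ matched by $\tilde\varphi$. By induction each $\Phi_n$ maps $\{|z|>1\}$ homeomorphically to $\{|w|>1\}$ (so the next pullback is defined), is continuous across $\partial\Omega$ (using $\tilde\varphi|_{\{|z|=1\}}=\mathrm{id}$), and satisfies $\|\mu_{\Phi_n}\|_\infty\le\|\mu_{\tilde\varphi}\|_\infty$ by the Wirtinger chain rule applied to the holomorphic $f$ and $g$.

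Finally I would pass to the limit. If $z\notin\Julia(f)$ then some iterate $f^k(z)$ lies outside $\Omega$, and then $\Phi_n(z)=g^{-k}\bigl(\tilde\varphi(f^k(z))\bigr)$ for every $n\ge k$; so $\Phi_n$ is eventually constant at $z$. Hence $\Phi_n$ converges pointwise on the open dense set $\complex\setminus\Julia(f)$, and combined with the uniform quasiconstant and the fact that all $\Phi_n$ agree on the open set $\complex\setminus\Omega$, normality of $K$-quasiconformal families yields locally uniform convergence to a quasiconformal $\Phi$ with the same dilatation bound. The identity $\Phi\circ f=g\circ\Phi$ on $\Omega$ passes to the limit from the defining pullback equation, and $\Phi(\Julia(f))=\Julia(g)$ then follows from the dynamical definition $\Julia(g)=\bigcap_n g^{-n}(\Omega')$ and $\Phi(\Omega)=\Omega'$. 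The step I expect to be the main obstacle is the initial modification: verifying that the required boundary correspondence is quasisymmetric with controlled constants and admits a uniformly quasiconformal extension to $\complex\setminus(\Omega\cup\overline{\disk})$, which needs care because $\partial\Omega$ may have infinitely many components accumulating only at infinity.
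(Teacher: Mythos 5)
Your proposal is essentially the paper's own argument: a pullback iteration $\Phi_{n+1}=g^{-1}\circ\Phi_n\circ f$ on $\Omega$, seeded by a version of $\varphi$ made equal to the identity on the unit circle, with the uniform quasiconstant coming from holomorphy of $f$ and $g$, convergence from eventual constancy on the escaping set plus compactness of $K$-quasiconformal families (starting from $\tilde\varphi$ rather than the identity is just an index shift). The one step you flag as the main obstacle --- extending the boundary correspondence over all of $\complex\setminus(\Omega\cup\overline{\disk})$, whose boundary has infinitely many components --- is exactly where your version would need repair, and the paper sidesteps it: exhaust $W=\complex\setminus\overline{\Omega}$ by smoothly bounded nested open sets $U_n$ with $\varphi(U_n)$ exhausting $W'$, pick one $U$ with $\overline{\disk}\subset U$ and $\overline{\disk}\subset\varphi(U)$, and modify $\varphi$ only inside $U$ (identity on $\overline{\disk}$, unchanged outside $U$). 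This reduces the extension problem to two smoothly bounded annuli $U\setminus\overline{\disk}$ and $\varphi(U)\setminus\overline{\disk}$, where it is routine, and leaves $\varphi$ untouched near $\partial\Omega$ so no quasisymmetry of the prime-end correspondence along $\partial\Omega$ is ever needed. One further small point: to conclude that each glued $\Phi_{n+1}$ is globally quasiconformal you should invoke the Royden gluing lemma or removability of the analytic arcs $\partial\Omega$, as the paper does.
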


\begin{proof}
The statement and proof are  due to Lasse Rempe-Gillen
\cite{MR2570071}, but we recreate it here for the convenience
of the reader.

Let $W =  \complex \setminus \overline {\Omega}$ and 
$W' =  \complex \setminus \overline {\Omega'}$. We can exhaust 
$W$ by nested open sets  $ U_1
\subset U_2 \subset \dots $ with smooth boundaries and 
$\Omega'$ is exhausted by the images $  \varphi(U_n)$. Since 
the union of these open nested sets covers $\overline{\disk}$ one 
of them covers $\overline{\disk}$, call it $U$.
Thus we can find a new quasiconformal map $\phi:\complex \to 
\complex $ that equals $\varphi$ outside $U$ and is the 
identity on $\overline{\disk}$.

Now inductively 
define a sequence of quasiconformal maps $\{ \Phi_n\}$ on $\complex$\
by setting $\Phi_0$ to be the identity and, in general,
$$ \Phi_{n+1} =
\begin{cases}  g^{-1}  \circ \Phi_n \circ f,& z \in \Omega\\
              \phi, & z \not \in \Omega\\
\end{cases}
$$
Note that since 
 $f: \Omega \to \{ |z|>1\}$ and  $g: \Omega' \to \{ |z|>1\}$ are covering maps, 
the definition of $\Phi_{n+1}$ makes sense as long as  
$\Phi_n$ is a homeomorphism of $\{|z|>1\}$  to itself. 
We shall verify this below.

Set $U_0= U \cap \{ |z| > 1\}$ and let 
  $U_n = \cup_{k=1}^n\{z \in \Omega: f^{k}(z) \in \overline{U}\}$. 
Then $\cup_n U_n$ is the set of all points in $\Omega$
that eventually iterate out of $\Omega$. This is the complement 
of $\Julia(f)$ in $\Omega$ and hence is an open dense set in 
$\Omega$ by Lemma 2.3 of   \cite{MR2570071}.
Let $V_n = \cup_{k=1}^n U_k$.

We claim that 
\begin{enumerate}
\item  for $n \geq 0$, $\Phi_n$ maps  $\{|z|>1\}$ to itself,
\item  for $n \geq 0$, $\Phi_n$ is quasiconformal with the same quasiconstant as $\phi$,
\item  for $n \geq 1$, $\Phi_n = \Phi_{n+1}$  on $V_n$.
\end{enumerate} 
We prove these by induction.  The case $n=0$  for (1) and 
(2)  is trivial since $\Phi_0$ is the identity.
For $n=1$, (3) holds because  if $z \in U_1$ then $f(z) \in U_0$
\begin{eqnarray*}
 \Phi_{2}(z)
& =&  g^{-1}(\Phi_1(f(z)))) 
 =  g^{-1}(\Phi_{0}(f(z)))) 
 =   \Phi_1(z) .
\end{eqnarray*} 
Similarly, for general $n$  (3) holds  because  if $z \in  V_n$, then 
$z \in U_k$ for some $1\leq k \leq n$, so $f(z) \in U_k$ for some 
$0 \leq k \leq n-1$. By the induction hypothesis,     
$f(z) \in f^{-n-1}(U)$, so 
\begin{eqnarray*}
 \Phi_{n+1}(z)
& =&  g^{-1}(\Phi_n(f(z)))) 
 =  g^{-1}(\Phi_{n-1}(f(z)))) 
 =  \Phi_{n}(z). \\
\end{eqnarray*} 

Claim (1) follows from (3) for every $n$  since (3) implies 
$\Phi_n $ is the identity on $U_0$, which contains the unit circle.
Since $\Phi_n$ is a homeomorphism of the plane that means  $\Phi_n$ 
is a homeomorphism of $\{|z|> 1\}$ to itself.

Since $f:\Omega \to \{ |z|>1\}$ and $g: \Omega' \to \{|z|>1\}$
 are holomorphic covering maps, (1) for $n$ implies that 
the first part of the definition of 
$\Phi_{n+1}$ gives a quasiconformal homeomorphism from 
$\Omega$ to $\Omega'$ with the same quasiconstant as 
$\Phi_n$. By induction, this constant is bounded by the quasiconstant 
for $\phi$. Outside $\Omega$, $\Phi_{n+1}$ agrees with $\phi$, 
so again is quasiconformal with constant bounded by 
that of $\phi$. By the Royden gluing lemma (e.g., 
Lemma 2 of \cite{MR0422691}, Lemma I.2 of \cite{MR816367} 
on page 303, \cite{MR0254234} ), 
this implies 
$\Phi_{n+1}$ is $K$-quasiconformal on the whole plane. 
(In many cases of interest, $\partial \Omega$ will be piecewise 
smooth, hence removable for quasiconformal mappings, and then 
the gluing  lemma is not needed.)
Thus all the claims have been established.

Since the  sequence $\{\Phi_n(z) \}$ is eventually constant for 
every $z$ in the dense set $\cup_n V_n \subset
 \Omega_0$, and since $K$-quasiconformal 
maps form a compact family,
 we deduce that $\Phi(z) = \lim_n \Phi_n $ 
defines a $K$-quasiconformal map of the plane. Moreover,
$$ \Phi_{n+1} =
  g^{-1}  \circ \Phi_n \circ f, \quad z \in \Omega
$$
becomes
$$ \Phi =
  g^{-1}  \circ \Phi \circ f, \quad z \in \Omega
$$
in the limit.
\end{proof}

         

\bibliography{models}
\bibliographystyle{plain}

\end{document}